\definecolor{labelkey}{rgb}{0,0.08,0.45}
\definecolor{refkey}{rgb}{0,0.6,0.0}
\definecolor{Brown}{rgb}{0.45,0.0,0.05}
\definecolor{lime}{rgb}{0.00,0.8,0.0}
\definecolor{lblue}{rgb}{0.5,0.5,0.99}
\newcommand{\aref}[1]{\hyperref[#1]{Appendix~\ref{#1}}}
\newcommand{\nnn}{\ensuremath{{n\in{\mathbb N}}}}
\newcommand{\menge}[2]{\big\{{#1}~\big |~{#2}\big\}}
\newcommand{\fenv}[1]%
{\ensuremath{\,\overrightarrow{\operatorname{env}}_{#1}}}
\newcommand{\benv}[1]%
{\ensuremath{\,\overleftarrow{\operatorname{env}}_{#1}}}
\newcommand{\RR}{\ensuremath{\mathbb R}}
\newcommand{\NN}{\ensuremath{\mathbb N}}
\providecommand{\BB}[2]{\operatorname{ball}(#1;#2)}
\newcommand{\ran}{\ensuremath{\operatorname{ran}}}
\newcommand{\zer}{\ensuremath{\operatorname{zer}}}
\newcommand{\Id}{\ensuremath{\operatorname{Id}}}
\newcommand{\TAB}{T_{(A,B)}}
\crefname{equation}{}{equations}
\crefname{chapter}{Appendix}{chapters}
\crefname{item}{}{items}
\newtheorem{theorem}{Theorem}[section]
\newtheorem{lemma}[theorem]{Lemma}
\newtheorem{lem}[theorem]{Lemma}
\newtheorem{prop}[theorem]{Proposition}
\newtheorem{thm}[theorem]{Theorem}%[section]
\newtheorem{example}[theorem]{Example}
\newtheorem{ex}[theorem]{Example}
\newtheorem{fact}[theorem]{Fact}
\def\endproof{\ensuremath{\hfill \quad \blacksquare}}
\providecommand{\ds}{\displaystyle}
\providecommand{\abs}[1]{\lvert#1\rvert}
\providecommand{\Abs}[1]{\Big\lvert#1\Big\rvert}
\providecommand{\norm}[1]{\lVert#1\rVert}
\providecommand{\normsq}[1]{\lVert#1\rVert^2}
\providecommand{\bk}[1]{\left(#1\right)}
\providecommand{\stb}[1]{\left\{#1\right\}}
\providecommand{\innp}[1]{\langle#1\rangle}
\providecommand{\RA}{\Rightarrow}
\providecommand{\RR}{\mathbb{R}}
\providecommand{\ran}{\operatorname{ran}}
\providecommand{\intr}{\operatorname{int}}
\providecommand{\gap}{\operatorname{v}}
\newcommand{\fix}{\ensuremath{\operatorname{Fix}}}
\providecommand{\parl}{\operatorname{par}}
\providecommand{\Id}{\operatorname{{ Id}}}
\providecommand{\fady}{\varnothing}
\providecommand{\rras}{\rightrightarrows}
\providecommand{\NN}{\mathbb{N}}
\providecommand{\BB}[2]{\operatorname{ball}(#1;#2)}
\providecommand{\fix}{\operatorname{Fix}}
\providecommand{\ran}{\operatorname{ran}}
\providecommand{\rec}{\operatorname{rec}}
\providecommand{\Id}{\operatorname{Id}}
\providecommand{\pt}{{\partial}}
\providecommand{\zer}{\operatorname{zer}}
\providecommand{\T}{{ T}}
\providecommand{\fady}{\varnothing}
\providecommand{\RR}{\mathbb{R}}
\providecommand{\NN}{\mathbb{N}}
\providecommand{\linop}{L}
\newcommand{\DRS}[2]{T_{{#1},{#2}}}
\def\namedlabel#1#2{\begingroup
   \def\@currentlabel{#2}%
   \label{#1}\endgroup
}
\definecolor{myblue}{rgb}{.8, .8, 1}
  \newcommand*\mybluebox[1]{%
    \colorbox{myblue}{\hspace{1em}#1\hspace{1em}}}
\newenvironment{myproof}[1][\proofname]{
{\emph {#1} }%
}{\endproof}
\begin{document}
%-------------------------------------------------------------------------

\title{\textsc
The Douglas-Rachford algorithm for two (not necessarily
intersecting) affine subspaces}

\author{
Heinz H.\ Bauschke\thanks{
Mathematics, University
of British Columbia,
Kelowna, B.C.\ V1V~1V7, Canada. E-mail:
\texttt{heinz.bauschke@ubc.ca}.}
~and Walaa M.\ Moursi\thanks{
Mathematics, University of
British Columbia,
Kelowna, B.C.\ V1V~1V7, Canada. E-mail:
\texttt{walaa.moursi@ubc.ca}.}}

\date{April 14, 2015}
\maketitle
\begin{abstract}
\noindent
The Douglas--Rachford algorithm is a classical and
very successful 
splitting method for 
finding the zeros of the sums of  
monotone operators. When the underlying operators are normal cone
operators,
the algorithm solves a convex feasibility problem.
In this paper, we provide a detailed study of the Douglas--Rachford 
iterates and the corresponding {shadow sequence} when the 
sets are affine subspaces that do not necessarily intersect.
We prove strong convergence of the shadows to the nearest
generalized solution.
Our results extend recent work from the consistent to the
inconsistent case. 
Various examples are provided to illustrates the results. 
\end{abstract}
{\small
\noindent
{\bfseries 2010 Mathematics Subject Classification:}
{Primary 
47H09, %Contraction-type mappings, nonexpansive mappings, $A$-proper mappings,
49M27, %Decomposition Methods
65K05, %Mathematical programming methods
65K10, %Optimization and variational techniques
Secondary 
47H05, %Monotone operators and generalizations
47H14, %Perturbations of nonlinear operators
49M29, %Methods involving duality etc.
49N15. %Duality theory
}

\noindent {\bfseries Keywords:}
Affine subspace,
Attouch--Th\'era duality,
Douglas--Rachford splitting operator,
firmly nonexpansive mapping, 
fixed point,
generalized solution,
linear convergence,
maximally monotone operator,
normal cone operator,
normal problem,
projection operator.
}

\section{Introduction}

Throughout this paper
\begin{empheq}[box=\mybluebox]{equation*}
\label{T:assmp}
X \text{~~is a real Hilbert space},
\end{empheq} 
with inner product $\innp{\cdot,\cdot}$ and
induced norm $\norm{\cdot}$. A (possibly) set-valued operator 
$A:X\rras X$ is \emph{monotone} if any two pairs 
$(x,u)$ and $(y,v)$ in the graph of $A$
satisfy
$\innp{x-y,u-v}\ge 0$, and 
 is \emph{maximally monotone} 
if it is monotone and 
any proper enlargement of the graph of 
$A$ (in terms of set inclusion) 
destroys the monotonicity of $A$.
Monotone operators play an important role in modern optimization
and nonlinear analysis; see, e.g., the books
\cite{BC2011}, 
\cite{BorVanBook},
\cite{Brezis}, 
\cite{BurIus},
\cite{Simons1},
\cite{Simons2},
\cite{Zeidler2a},
\cite{Zeidler2b},
and
\cite{Zeidler1}. 

Let $A:X\rras X$ be maximally monotone
and let $\Id:X\to X$ be the identity operator. 
The \emph{resolvent} of $A$ is $J_A:=(\Id+A)^{-1}$
and the \emph{reflected resolvent} is
$R_A:=2J_A-\Id$. It is well-known that
$J_A$ is single-valued, maximally 
monotone and 
firmly nonexpansive. 

The sum problem for two maximally 
monotone operators
$A$ and $B$
is to find $x\in X$ such that $0\in Ax+Bx$.
When $(A+B)^{-1}(0)\neq \fady $
one approach to solve the problem 
is the Douglas--Rachford splitting technique.
Recall that the Douglas--Rachford 
splitting operator 
\cite{L-M79}  for the ordered pair of
operators $(A,B)$ is defined by
\begin{empheq}[box=\mybluebox]{equation}
\label{def:T}
\TAB:=\tfrac{1}{2}(\Id+R_BR_A)
=\Id-J_A+J_B R_A.
\end{empheq} 
Let $x_0\in X$. When $(A+B)^{-1}(0)\neq \fady $
the 
\emph{``governing sequence"} 
$(\TAB^n x_0)_{n\in \NN}$
produced by the Douglas--Rachford operator
 converges 
weakly to a point in $\fix \TAB$
\footnote{$\fix T=\menge{x\in X}{x=Tx}$ is the set of fixed points of $T$.}
(see \cite{L-M79})
and the \emph{``shadow sequence"} 
$(J_A\TAB ^n x_0)_{n\in \NN}$
converges weakly to a point in
$(A+B)^{-1}0$. 
For further information on the Douglas--Rachford algorithm, we
refer the reader to
\cite{EckBer}, 
\cite{L-M79},
\cite{Svaiter2012},
and also \cite{BC2011}. 

When $A:=N_U$ and $B:=N_V$
\footnote{Throughout the paper we use 
$N_C$ and $P_C$ to denote 
the \emph{normal cone} and \emph{projector} associated
with a nonempty closed convex subset $C$ of $X$, respectively. 
}, 
where $U$ and $V$ are 
nonempty closed convex subsets of $X$, 
the sum problem 
is equivalent to the convex 
feasibility problem:
Find $x\in U\cap V$. 
In this case, using \cite[Example~23.4]{BC2011},
\begin{empheq}[box=\mybluebox]{equation}
T:=T_{(N_U,N_V)}=\Id-P_U+P_VR_U,
\end{empheq} 
where $R_U=2P_U-\Id$.
In the inconsistent case, when $U\cap V= \fady$, 
the governing sequence is proved 
to satisfy 
that $\norm{T^n x}\to +\infty$ and the shadow sequence 
$(P_UT^n x)_{n\in \NN}$ remains bounded with 
the weak cluster points being the best approximation 
pairs relative to $U$ and $V$ provided they exist (see \cite{BCL04}).

Unlike the 
method of alternating projections, 
which employs the operator $P_VP_U$,
the Douglas--Rachford method is not fully understood 
in the inconsistent case. 
Nonetheless, the Douglas-Rachford operator
is used in \cite{Sicon2014} to define the ``\emph{normal 
problem}" when the original problem is possibly inconsistent. 
In this case the set of best approximation solutions relative  to
$U$ (which are also known as the \emph{normal solutions}, see \cite{Sicon2014})
is
$U\cap(\gap+V)$, where $\gap=P_{\overline{\ran}(\Id-T)}0$.
It is natural to ask what can we learn about the 
algorithm in the highlight of the new 
concept of the normal problem.

The goal of this paper is to study the case 
 when $U$ and 
$V$ are closed affine subspaces 
that do not necessarily intersect.
The Douglas--Rachford method 
for two closed affine subspaces 
has recently shown to be very useful 
in many applications, for instance, 
the nonconvex sparse affine feasibility problem
(see \cite{HL13} and \cite{HLN14})
and basis pursuit problem (see \cite{DZ14}).
Our results show that
the shadow sequence 
will always converge strongly to a best approximation
solution in $U\cap(\gap+V)$ and therefore we generalize the
main results in \cite{JAT2014}. This is remarkable 
because we do not have to have prior knowledge about the 
\emph{gap vector $\gap$}; 
the shadow sequence is simply $(P_UT^n x_0)_\nnn$.
Our proofs critically rely on the well-developed results 
in the consistent case in \cite{JAT2014}
and the structure of the normal problem 
studied in \cite{Sicon2014}.

We are now ready to briefly summarize our main results: 
\begin{itemize}
\item[{\bf R1}]\namedlabel{R:1}{\bf R1}
We compare the sequences $((_{-\gap}T)^nx)_{n\in \NN}$,
$((T_{-\gap})^nx)_{n\in \NN}$
\footnote{Let $w\in X$. 
We define the \emph{inner shift}
and \emph{outer shift} of an operator $T$
 by $w$  at $x\in X$ by
$
T_{w}x:=T(x-w)$ and $_{w}Tx:=-w+Tx,
$
respectively.
}
 and $(T^nx+n\gap)_{n\in \NN}$
 when $T$ is an affine nonexpansive
 operator
\footnote{Recall that $T$ is \emph{nonexpansive} if
$(\forall x\in X)(\forall y\in X)$
$\|Tx-Ty\|\leq \|x-y\|$.}
and $\gap:=P_{\overline{\Id-T}}0\in\ran(\Id-T)$
\footnote{In highlight of \cref{F:v:WD}
the vector $\gap $ is unique and well-defined.}.
We prove that 
the three sequences coincide
(see \cref{P:aff}).
Surprisingly, when we drop the assumption of $T$ being affine, 
the sequences can be dramatically 
different (see \cref{e:asym}).
\item[{\bf R2}]\namedlabel{R:2}{\bf R2}
We prove the strong convergence 
of the shadow sequence $(P_U T^n x_0)_{n\in \NN}$
 when $U$ and $V$ are affine subspaces 
 that do not have to intersect
 (see \cref{thm:main:aff}).  We identify the limit to be 
 the best approximation 
solution; moreover, the rate of convergence is linear 
when $U+V$ is closed.
\item[{\bf R3}]\namedlabel{R:3}{\bf R3}
 In view of \ref{R:2}
 it is tempting to conjecture that 
 the shadow sequence $(J_A T^n x_0)_{n\in \NN}$
 in the inconsistent case (i.e., when $(A+B)^{-1}=\fady$) 
 converges 
 in a more general setting. We illustrate
 the somewhat surprising fact that 
if $A$ and $B$ are affine --- but
 not normal cone --- operators (see \cref{ex:not:NC}),
 the sequence $(J_A T^n x_0)_{n\in \NN}$
can be unbounded. In fact, we can have 
 $\norm{J_AT^n x_0}\to +\infty$ even though the 
 sum problem has \emph{normal solutions}
 \footnote{The normal solutions are the counterpart 
 of the best approximation solutions in the context
 of the normal problem \cite{Sicon2014} when 
 the operators are not normal cone operators 
 (see \cref{S:main} for details).}. 
 This illustrates that 
 normal cone operators have additional structure that 
 makes \ref{R:2} possible.
\end{itemize}

 \subsection*{Organization}
The remainder of this paper 
is organized as follows. 
\cref{nexp:f:nexp} contains 
a collection of new results 
concerning nonexpansive and 
firmly nonexpansive operators 
whose fixed point sets could 
possibly be empty. 
\cref{S:aff:case} focuses on {affine} 
nonexpansive operators
and their corresponding inner and outer
{``normal" shifts}. 
Various examples that illustrate 
our theory are provided. 
\cref{S:main} is devoted to 
present the main results.
We prove 
strong convergence of the shadows 
of the Douglas-Rachford iterates
of two (not necessarily intersecting) 
affine subspaces. 
\subsection*{Notation}
Let $C$ be a nonempty closed convex 
subset of $X$. 
The \emph{recession cone} of $C$ is
$\rec C:=\{x\in X~|~x+C\subseteq C\}$,
the \emph{polar cone} of $C$
is $C^{\ominus}:=\{u\in X~|~\sup \innp{C,U}\le 0\}$
and the \emph{dual cone} of $C$
is $C^\oplus=-C^\ominus$.
When $C$ is an affine subspace 
the \emph{linear space parallel to $C$} is
$\parl C=C-C$.
Otherwise, the notation we utilize is standard and follows, 
e.g., \cite{BC2011} and \cite{Rock98}.

\section{Nonexpansive and firmly nonexpansive operators}
\label{nexp:f:nexp}
In this section, we collect various results on (firmly)
nonexpansive operators that will be useful later. 
Let $w\in X$. Recall  that for a single-valued or set-valued 
operator $T$ we define the \emph{inner shift}
and \emph{outer shift} by $w$  at $x\in X$ by
\begin{equation}\label{eq:def:in:out}
T_{w}x:=T(x-w) \quad \text{and}\quad _{w}Tx:=-w+Tx,
\end{equation}
respectively.
\begin{lemma}\label{lem:in:out:T}
Let $T:X\to X$ and let $w\in X$. Then the following hold:
\begin{enumerate}
\item\label{lem:in:out:T:i}
$\fix(T_{-w})=-w+\fix(w+T)=-w+\fix(_{-w}T)$.
\item\label{lem:in:out:T:i:ii}
$w\in \ran(\Id-T)\iff \fix (w+T)
\neq \fady\iff \fix(T_{-w})\neq \fady$.
\item\label{lem:in:out:T:ii}
$(\forall x\in X)(\forall n\in \NN)
\quad(T_{-w})^n x=-w+(w+T)^n(x+w)
=-w+(_{-w}T)^n(x+w)$.
\end{enumerate}
\end{lemma}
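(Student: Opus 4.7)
The plan is to unpack the definitions in \eqref{eq:def:in:out} and perform direct algebraic verifications; nothing deep is needed. The first observation to make is that the composite symbol $w+T$ (meaning the operator $x\mapsto w+Tx$) coincides verbatim with the outer shift $_{-w}T$, since $(_{-w}T)x=-(-w)+Tx=w+Tx$. Hence the rightmost equality in \cref{lem:in:out:T:i}, the second ``$\iff$'' in \cref{lem:in:out:T:i:ii}, and the rightmost equality in \cref{lem:in:out:T:ii} are immediate from the definitions, and throughout the rest of the argument I may use $w+T$ and $_{-w}T$ interchangeably.

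For \cref{lem:in:out:T:i}, I would verify the bijection $y\mapsto y-w$ from $\fix(w+T)$ onto $\fix(T_{-w})$. Indeed, $x\in\fix(T_{-w})$ iff $x=T_{-w}x=T(x-(-w))=T(x+w)$. Setting $y:=x+w$, this is equivalent to $y-w=Ty$, i.e., $y=w+Ty=(w+T)y$, so $y\in\fix(w+T)$. Reading the bijection in both directions yields $\fix(T_{-w})=-w+\fix(w+T)$.

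For \cref{lem:in:out:T:i:ii}, the equivalence of nonemptiness of $\fix(w+T)$ and $\fix(T_{-w})$ is immediate from \cref{lem:in:out:T:i}. The remaining equivalence is just a rewriting: there exists $y\in X$ with $y=w+Ty$ iff there exists $y\in X$ with $y-Ty=w$, iff $w\in\ran(\Id-T)$.

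For \cref{lem:in:out:T:ii}, I would proceed by induction on $n$. The base case $n=0$ is trivial since both sides evaluate to $x$. For the inductive step, assuming $(T_{-w})^n x=-w+(w+T)^n(x+w)$, I compute
\begin{equation*}
(T_{-w})^{n+1}x = T_{-w}\bigl((T_{-w})^n x\bigr) = T\bigl((T_{-w})^n x + w\bigr) = T\bigl((w+T)^n(x+w)\bigr),
\end{equation*}
while on the other hand
\begin{equation*}
-w+(w+T)^{n+1}(x+w) = -w + w + T\bigl((w+T)^n(x+w)\bigr) = T\bigl((w+T)^n(x+w)\bigr),
\end{equation*}
which closes the induction. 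The only ``obstacle'' here is notational discipline: one must not confuse the inner shift $T_{-w}$ with the outer shift $_{-w}T$, and one must remember that the subscript $-w$ in $T_{-w}$ corresponds to the substitution $x\mapsto x+w$ under the definition $T_w x=T(x-w)$. Once this bookkeeping is in place the lemma is essentially immediate.
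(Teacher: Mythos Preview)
Your proof is correct and follows essentially the same approach as the paper: direct chains of equivalences for \ref{lem:in:out:T:i} and \ref{lem:in:out:T:i:ii}, and the identical induction for \ref{lem:in:out:T:ii}. If anything, you are slightly more explicit than the paper in noting that $w+T$ and ${}_{-w}T$ coincide by definition, which the paper leaves to the reader.
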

\begin{proof}
\ref{lem:in:out:T:i}: 
Let $x\in X$. Then $x\in \fix(T_{-w})$
$\iff x=T(x+w)\iff x+w =w+T(x+w)\iff x+w\in \fix(w+T)$
$\iff x\in -w+ \fix(w+T)$.

\ref{lem:in:out:T:i:ii}:
$w\in \ran(\Id-T)$ $\iff (\exists x\in X)$ such that
$w=x-Tx$ $\iff(\exists x\in X)$ such that $x=w+Tx$ 
$\iff \fix(w+T)\neq\fady$. Now combine with 
\ref{lem:in:out:T:i}.

\ref{lem:in:out:T:ii}: We proceed by induction. 
The conclusion is clear 
when $n=0$. Now assume 
that for some $n\in \NN$
it holds that $(T_{-w})^n x=-w+(w+T)^n(x+w)$. Then 
$(T_{-w})^{n+1}  x
=T((T_{-w})^n x+w)=T(-w+(w+T)^n(x+w)+w)=-w+w+T((w+T)^n(x+w))
=-w+(w+T)^{n+1}(x+w)$, as claimed.
\end{proof}

We recall the following important fact.
\begin{fact}[\bf{Infimal displacement vector}]
\label{F:v:WD}
{\rm (See, e.g., \cite{Ba-Br-Reich78},\cite{Br-Reich77} 
and \cite{Pazy}.)}
Let $T:X\to X$ be nonexpansive.
Then $\overline{\ran}(\Id-T)$ is convex;
consequently, the infimal displacement vector
\begin{equation}
\gap:=P_{\overline{\ran}{(\Id-T)}}0 
\end{equation}
is the unique and well-defined 
element in $\overline{\ran}{(\Id-T)}$
such that
$
\norm{\gap}=\ds\inf_{x\in X}\norm{x-Tx}.
$
\end{fact}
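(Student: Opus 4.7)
The statement has three components to establish: (a) convexity of $\overline{\ran}(\Id-T)$; (b) existence and uniqueness of the projection $\gap$ of $0$ onto that set; and (c) the identification $\norm{\gap}=\inf_{x\in X}\norm{x-Tx}$. Parts (b) and (c) are routine Hilbert-space consequences once (a) is in hand, so the real content is (a), and that is where I would concentrate my effort.

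For (a), the plan is to reduce to a known convexity result in monotone-operator theory. Because $T$ is nonexpansive, $J:=\tfrac{1}{2}(\Id+T)$ is firmly nonexpansive, and hence $J=J_A$ for some maximally monotone $A:X\rras X$. Since $\Id-T=2(\Id-J_A)$, convexity of $\overline{\ran}(\Id-T)$ is equivalent to convexity of $\overline{\ran}(\Id-J_A)$. A direct computation from the definition of the resolvent yields the identity $\Id-J_A=J_{A^{-1}}$, and therefore
\begin{equation*}
\ran(\Id-J_A)=\ran J_{A^{-1}}=\dom(A^{-1})=\ran A.
\end{equation*}
The classical theorem that $\overline{\ran A}$ is convex for every maximally monotone operator on a real Hilbert space --- equivalently, Rockafellar's result that $\overline{\dom}(A^{-1})$ is convex --- then delivers (a). This is the only step that genuinely uses the Hilbert-space structure of $X$.

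For (b), the set $\overline{\ran}(\Id-T)$ is nonempty (it contains $0-T0$), closed by definition, and convex by (a); hence the best-approximation theorem in Hilbert space supplies a unique element $\gap$ realizing the projection of $0$ onto it. For (c), by definition of the projection, $\norm{\gap}=\inf\{\norm{y}\mid y\in\overline{\ran}(\Id-T)\}$, and continuity of the norm combined with density of $\ran(\Id-T)$ in its closure gives $\inf_{y\in\overline{\ran}(\Id-T)}\norm{y}=\inf_{x\in X}\norm{x-Tx}$.

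The main obstacle is the convexity claim in (a); everything else is bookkeeping. A self-contained alternative --- which I would pursue only if the monotone-operator fact were unavailable --- is a contractive-perturbation argument: for $\lambda\in(0,1)$ and $y\in X$, the map $x\mapsto(1-\lambda)Tx+\lambda y$ is a strict contraction with a unique fixed point $x_\lambda$, and by varying $y$ and sending $\lambda\to 0^+$ one can produce approximate preimages realizing arbitrary convex combinations of elements of $\ran(\Id-T)$. This is substantially more technical than the reduction to maximal monotonicity and, in my view, less transparent.
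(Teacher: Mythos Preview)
Your argument is correct. Note, however, that the paper does not actually prove this statement: it is recorded as a \emph{Fact} with citations to \cite{Ba-Br-Reich78}, \cite{Br-Reich77}, and \cite{Pazy}, and no proof is supplied anywhere in the paper or its appendices. So there is nothing to compare against in the paper itself.

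That said, your route through monotone-operator theory is clean and self-contained: the chain $T$ nonexpansive $\Rightarrow$ $J=\tfrac12(\Id+T)$ firmly nonexpansive $\Rightarrow$ $J=J_A$ for some maximally monotone $A$ (Minty) $\Rightarrow$ $\ran(\Id-T)=2\ran(\Id-J_A)=2\ran J_{A^{-1}}=2\ran A$ $\Rightarrow$ $\overline{\ran}(\Id-T)$ convex (Rockafellar's theorem on the closure of the domain/range of a maximally monotone operator) is correct at every link, and parts (b) and (c) are indeed routine once (a) is in place. The alternative you sketch --- producing approximate preimages of convex combinations via fixed points of strict contractions --- is in fact closer in spirit to the original arguments in the cited references (particularly Pazy), so it is worth knowing, but your reduction to the convexity of $\overline{\ran A}$ is the cleaner way to package the result within the monotone-operator framework the paper already uses.
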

Unless stated otherwise, throughout this paper 
we assume that 
\begin{empheq}[box=\mybluebox]{equation}\label{T:assmp}
T \text{~~is a nonexpansive operator on X},
\end{empheq} 
and that
\begin{empheq}[box=\mybluebox]{equation}\label{gap:assmp:pre}
\gap:=P_{\overline{\ran}{(\Id-T)}}0\in {{\ran}\bk{\Id-T}}.
\end{empheq} 
In view of 
\cref{gap:assmp:pre} and
\cref{lem:in:out:T}\ref{lem:in:out:T:i:ii}
we have 
\begin{equation}\label{e:not:fady:fix}
\fix(T_{-\gap})\neq \fady
\quad
\text{and}
\quad
 \fix(\gap+T)\neq \fady.
\end{equation}
We start with the following useful result.
\begin{lem}\label{lem:norm:proj}
Let $C$ be a nonempty closed convex subset of $X$
and let $c\in C$ satisfies that $\norm{c}=\norm{P_C0}$.
Then $c=P_C0$.
\end{lem}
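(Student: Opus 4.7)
The plan is to use the strict convexity of the Hilbert space norm, expressed through the parallelogram identity, to show that the minimum-norm point in a convex set is unique. Set $p:=P_C0$, so that by definition $p\in C$ and $\norm{p}\le\norm{x}$ for every $x\in C$, and moreover $\norm{p}=\norm{c}$ by hypothesis.

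The key step is to test the midpoint. Since $C$ is convex and $c,p\in C$, the point $m:=(c+p)/2$ lies in $C$, hence $\norm{m}\ge \norm{p}$. On the other hand the parallelogram identity
\[
\Big\|\tfrac{c+p}{2}\Big\|^{2}+\Big\|\tfrac{c-p}{2}\Big\|^{2}=\tfrac{1}{2}\norm{c}^{2}+\tfrac{1}{2}\norm{p}^{2}
\]
combined with $\norm{c}=\norm{p}$ gives $\norm{m}^{2}=\norm{p}^{2}-\tfrac{1}{4}\norm{c-p}^{2}$. Substituting into $\norm{m}\ge\norm{p}$ yields $\norm{c-p}^{2}\le 0$, hence $c=p=P_C0$.

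There is essentially no obstacle here; the argument is a one-liner once the midpoint is considered. An alternative I would mention only in passing is the variational-inequality characterization of $P_C0$, namely $\innp{p,x-p}\ge 0$ for all $x\in C$; applied with $x=c$ this gives $\innp{p,c}\ge\norm{p}^{2}=\norm{p}\,\norm{c}$, so Cauchy--Schwarz holds with equality, forcing $c$ to be a nonnegative multiple of $p$ and then $\norm{c}=\norm{p}$ forces $c=p$ (with the trivial case $p=0$ handled separately). The parallelogram approach is cleaner and avoids the case distinction, so that is what I would use in the final write-up.
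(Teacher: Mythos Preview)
Your proof is correct. The parallelogram-law/midpoint argument is a standard and clean way to establish uniqueness of the minimum-norm element in a closed convex set, and your write-up has no gaps.

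The paper, however, takes a different route: it uses the firm nonexpansiveness of $P_C$. Since $c\in C$ one has $P_Cc=c$, so
\[
\norm{c-P_C0}^2=\norm{P_Cc-P_C0}^2\le\norm{c}^2-\norm{(\Id-P_C)c-(\Id-P_C)0}^2=\norm{c}^2-\norm{P_C0}^2=0.
\]
Your approach is more self-contained and elementary, relying only on convexity of $C$ and the inner-product structure of $X$; it is really the classical uniqueness proof for the projection itself. The paper's approach is shorter once firm nonexpansiveness is taken as known, and it fits the ambient toolkit of the paper (which works extensively with firmly nonexpansive operators). Either argument is perfectly acceptable here; your alternative via the variational inequality and Cauchy--Schwarz is also valid but, as you note, less tidy because of the case $p=0$.
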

\begin{proof}
See \nameref{Appp:1}.
\end{proof}
\begin{prop}\label{prop:fix:inc}
Let $y_0\in \fix(\gap+T)$.
Then the following hold:
\begin{enumerate}
\item\label{prop:fix:inc:i}
$y_0-\RR_{+}\gap\subseteq \fix(\gap+T)$. 
\item\label{prop:fix:inc:-1}
$\fix(\gap+T)-\RR_+\gap=\fix(\gap+T)$.
\item\label{prop:fix:inc:i:0}
$-\RR_+\gap\subseteq \rec(\fix(\gap+T))$.
\item\label{prop:fix:inc:i:i}
$(\forall n\in \NN)$ $T^n y_0=y_0-n\gap$.
\item\label{prop:fix:inc:ii}
$\left]-\infty,1\right]\cdot\gap+\fix T_{-\gap}\subseteq\fix (\gap+T)$.
In particular it holds that $\fix (T_{-\gap}) \subseteq \fix(\gap+T)$.

\item\label{prop:fix:inc:ii:ii}
For every $x\in X$, the sequence $(T^nx+n\gap)_{n\in \NN}$
is F\'{e}jer monotone with respect to both $\fix(\gap+T)$ 
and $\fix (T_{-\gap})$.
\item\label{prop:fix:inc:iii} 
Suppose that $x_0\in \fix T_{-\gap}$ and
set $(\forall n\in \NN )~x_n=T^nx_0$. 
Then  $x_n=x_0-n\gap$
 and $(x_n)_{n\in \NN}
$ lies in $ \fix(T_{-\gap})$.
\end{enumerate}
\end{prop}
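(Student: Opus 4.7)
The plan is to treat (i) as the hard step from which the other parts follow by routine manipulation. The central obstacle is (i), which is the only claim that genuinely uses the minimality property of $\gap$; once we have that $y_0 - \lambda \gap \in \fix(\gap+T)$ for every $\lambda \ge 0$, the remaining items reduce to bookkeeping and a short inductive argument.

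\textbf{Proof of (i).} I would first establish the inclusion for $\lambda \in [0,1]$. Write $y_\lambda := y_0 - \lambda \gap$. Since $y_0 \in \fix(\gap+T)$, we have $(\Id-T)y_0 = \gap$, so
\[
y_\lambda - Ty_\lambda = (y_0 - Ty_0) + (Ty_0 - Ty_\lambda) - \lambda\gap = (1-\lambda)\gap + (Ty_0 - Ty_\lambda).
\]
By the triangle inequality and nonexpansiveness of $T$,
\[
\|y_\lambda - Ty_\lambda\| \le (1-\lambda)\|\gap\| + \|Ty_0 - Ty_\lambda\| \le (1-\lambda)\|\gap\| + \lambda\|\gap\| = \|\gap\|.
\]
By \cref{F:v:WD}, $\|\gap\|$ is the infimum of $\|x-Tx\|$ over $x \in X$, so equality holds and $y_\lambda - Ty_\lambda \in \ran(\Id-T) \subseteq \overline{\ran}(\Id-T)$ has norm equal to $\|P_{\overline{\ran}(\Id-T)}0\|$. \cref{lem:norm:proj} then forces $y_\lambda - Ty_\lambda = \gap$, i.e., $y_\lambda \in \fix(\gap+T)$. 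To extend to all $\lambda \ge 0$, I iterate: having $y_0 - \gap \in \fix(\gap+T)$, apply the argument just given with $y_0$ replaced by $y_0 - \gap$ to obtain $y_0 - [1,2]\gap \subseteq \fix(\gap+T)$, and so on by induction, yielding $y_0 - \RR_+\gap \subseteq \fix(\gap+T)$.

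\textbf{Proof of (-1), (i:0), (i:i).} Item (-1) is immediate from (i) applied to each $y_0 \in \fix(\gap+T)$. For (i:0) note that any $-\lambda\gap$ with $\lambda \ge 0$ satisfies $-\lambda\gap + \fix(\gap+T) \subseteq \fix(\gap+T) - \RR_+\gap = \fix(\gap+T)$ by (-1), which is exactly the definition of $\rec$. For (i:i), I induct on $n$: the case $n=0$ is trivial, and if $T^n y_0 = y_0 - n\gap$, then by (i) $y_0 - n\gap \in \fix(\gap+T)$, so $T(y_0 - n\gap) = (y_0 - n\gap) - \gap = y_0 - (n+1)\gap$.

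\textbf{Proof of (ii) and (ii:ii).} For (ii), observe that $x_0 \in \fix T_{-\gap}$ means $T(x_0 + \gap) = x_0$, i.e., $(x_0+\gap) - T(x_0+\gap) = \gap$, so $x_0 + \gap \in \fix(\gap+T)$. Hence $\fix T_{-\gap} + \gap \subseteq \fix(\gap+T)$, and combining with (-1) gives
\[
\left]-\infty,1\right]\cdot\gap + \fix T_{-\gap} = \fix T_{-\gap} + \gap - \RR_+\gap \subseteq \fix(\gap+T) - \RR_+\gap = \fix(\gap+T),
\]
with the special case $\lambda = 0$ giving $\fix T_{-\gap} \subseteq \fix(\gap+T)$. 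For (ii:ii), fix $y \in \fix(\gap+T)$. By (i:i), $y - (n+1)\gap = T^{n+1}y = T(y - n\gap)$, so
\[
\|T^{n+1}x + (n+1)\gap - y\| = \|T(T^n x) - T(y - n\gap)\| \le \|T^n x - (y - n\gap)\| = \|T^n x + n\gap - y\|,
\]
by nonexpansiveness of $T$. This proves Féjer monotonicity with respect to $\fix(\gap+T)$, and hence with respect to $\fix T_{-\gap}$ by the inclusion just established in (ii).

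\textbf{Proof of (iii).} If $x_0 \in \fix T_{-\gap}$, then by (ii) $x_0 \in \fix(\gap+T)$, so (i:i) gives $x_n = T^n x_0 = x_0 - n\gap$. To verify $x_n \in \fix T_{-\gap}$, compute $T_{-\gap}(x_n) = T(x_n + \gap) = T(x_0 - (n-1)\gap) = T^n x_0 = x_0 - n\gap = x_n$, where the third equality uses (i:i) once more. This completes the plan.
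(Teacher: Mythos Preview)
Your proof is correct and follows the same skeleton as the paper's: the minimality of $\gap$ together with nonexpansiveness forces $\|y_\lambda - Ty_\lambda\| = \|\gap\|$, and then \cref{lem:norm:proj} pins down $y_\lambda - Ty_\lambda = \gap$; the remaining items are bookkeeping from (i), exactly as in the paper.

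The one noteworthy variation is in part (i). The paper proves $y_0 - n\gap \in \fix(\gap+T)$ for integer $n$ by induction (stepping by $\gap$ each time) and then invokes the convexity of $\fix(\gap+T)$ (via \cite[Corollary~4.15]{BC2011}) to fill in the non-integer $\lambda$. You instead prove the case $\lambda\in[0,1]$ directly via the triangle inequality and then iterate, which is slightly more self-contained since it avoids the external convexity citation. Both routes hinge on the same idea; yours trades one induction for another and removes a reference.
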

\begin{proof}
\ref{prop:fix:inc:i}:
First we use induction to show that
\begin{equation}\label{eq:n:induc}
(\forall n\in \NN) \quad y_0-n\gap\in \fix (\gap+T).
\end{equation}
Clearly when $n=0$ the base case holds true.
Now suppose that for some $n\in \NN$
it holds that
$y_0-n\gap\in \fix (\gap+T)$, i.e.,
\begin{equation}\label{eq:ax:i}
y_0-n\gap=\gap+T(y_0-n\gap).
\end{equation}

Using \cref{gap:assmp:pre} and \cref{eq:ax:i} we have
\begin{align*}
\norm{\gap}&\le\norm{(\Id-T)(y_0-(n+1)\gap)}
=\norm{y_0-(n+1)\gap-T(y_0-(n+1)\gap)}\\
&=\norm{y_0-n\gap -\gap-T(y_0-(n+1)\gap)}
=\norm{T(y_0-n\gap )-T(y_0-(n+1)\gap)}
\le \norm{\gap}.
\end{align*}
Consequently all the inequalities above are
 equalities
and we conclude that 
$\norm{\gap}=\norm{y_0-(n+1)\gap-T(y_0-(n+1)\gap)}$. 
It follows from \cref{gap:assmp:pre} and 
\cref{lem:norm:proj} that 
\begin{equation}
y_0-(n+1)\gap-T(y_0-(n+1)\gap)=\gap.
\end{equation}
That is, $y_0-(n+1)\gap=\gap+T(y_0-(n+1)\gap)$,
which proves \cref{eq:n:induc}.
Now using \cite[Corollary~4.15]{BC2011}
we learn that $\fix(\gap+T)$ is convex, which when combined with
\cref{eq:n:induc} yields \ref{prop:fix:inc:i}.

\ref{prop:fix:inc:-1}:
On the one hand it follows from \ref{prop:fix:inc:i} that
$\fix(\gap+T)-\RR_+\gap\subseteq\fix(\gap+T)$.
On the other hand
$\fix(\gap+T)=\fix(\gap+T)-0
\cdot\gap\subseteq \fix(\gap+T)-\RR_+\gap$.

\ref{prop:fix:inc:i:0}: This follows directly from 
\ref{prop:fix:inc:-1}.

\ref{prop:fix:inc:i:i}:
We use induction. Clearly $y_0-0\gap=y_0=T^0y_0$.
Now suppose that for some $n\in \NN$ it holds 
$T^ny_0=y_0-n\gap$. 
Using \ref{prop:fix:inc:i} we have
$T^{n+1}y_0=T(y_0-n\gap)=-\gap+y_0-n\gap
=y_0-(n+1)\gap$.

\ref{prop:fix:inc:ii}:
Using \cref{lem:in:out:T}\ref{lem:in:out:T:i} 
and \ref{prop:fix:inc:i} we have
$\left]-\infty,1\right]\cdot\gap+\fix T_{-\gap}
=\gap-\RR_+\gap+\fix T_{-\gap}
=\fix(\gap+T)-\RR_+\gap=\fix(\gap+T)$.
In particular we have 
$\fix T_{-\gap}= 0\cdot\gap+\fix T_{-\gap}
\subseteq\fix(\gap+T)$.

\ref{prop:fix:inc:ii:ii}:
Let $x\in X$ and let $y\in \fix(\gap+T)$. 
Then using \ref{prop:fix:inc:i:i}
we have for every $n\in \NN$,
\begin{align*}
\norm{T^{n+1}x+(n+1)\gap-y}
&=\norm{T^{n+1}x-(y-(n+1)\gap)}
=\norm{T^{n+1}x-T^{n+1}y}\\
&\le \norm{T^{n}x-T^{n}y}=\norm{T^{n}x-(y-n\gap)}
=\norm{T^{n}x+n\gap-y}.
\end{align*}
The statement for $\fix T_{-\gap}$
follows from \ref{prop:fix:inc:ii}.

\ref{prop:fix:inc:iii}:
Combine \ref{prop:fix:inc:ii} and \ref{prop:fix:inc:i:i}
to get that $x_n=x_0-n\gap$.
Now by 
\cref{lem:in:out:T}\ref{lem:in:out:T:i} 
$x_0+\gap\in \fix(\gap+T)$. 
Using \ref{prop:fix:inc:i} we have 
$(\forall n\in \NN)$ $x_0+\gap-n\gap\in \fix(\gap+T)$
or equivalently by \cref{lem:in:out:T}\ref{lem:in:out:T:i}
$x_0-n\gap\in -\gap+\fix(\gap+T)=\fix(T_{-\gap})$.
\end{proof}

The next example is readily verified.
\begin{example}\label{ex:short}
Let $C$ be a nonempty closed convex subset of $X$
and suppose that $T=\Id-P_C$. Then $T$ is 
firmly nonexpansive\footnote{Recall that $T\colon X\to X$ is \emph{firmly nonexpansive}
if 
$
(\forall x\in X)(\forall y\in X)\quad\|Tx-Ty\|^2 + \|(\Id-T)x-(\Id-T)y\|^2\leq \|x-y\|^2.
$
}
and $\gap=P_C0$.
Let $x\in X$. Then 
 $x\in \fix (\gap +T)\iff P_C x=\gap$, 
while $x\in \fix T_{-\gap}\iff P_C (x+\gap)=\gap$.
\end{example}
 \begin{prop}\label{prop:RR:conv}
Suppose that $X=\RR$,
and that $\fix T= \fady$. 
 Let $x\in \RR$ and set 
 $(\forall n\in \NN)~ y_n=T^nx+n\gap$.
Then the following hold:
\begin{enumerate}
\item\label{prop:RR:conv:i}
$(y_n)_{n\in \NN}$ converges.
\item\label{prop:RR:conv:ii}
$\RR\to\RR:x\mapsto \ds\lim_{n\to \infty} (T^n x+n\gap)$ is nonexpansive.
\item\label{prop:RR:conv:iii}
 Suppose that $T$ is firmly nonexpansive. Then
$\RR\to\RR:x\mapsto \ds\lim_{n\to \infty} (T^n x+n\gap)$ is firmly nonexpansive.
\end{enumerate}
\end{prop}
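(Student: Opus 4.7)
My plan is to exploit \cref{prop:fix:inc} heavily. First I would observe that the hypothesis $\fix T = \emptyset$ forces $\gap \neq 0$: otherwise $\gap = 0 \in \ran(\Id - T)$ by the standing assumption \cref{gap:assmp:pre}, which would give $\fix T \neq \emptyset$, a contradiction. Combined with \cref{e:not:fady:fix}, this guarantees $\fix(\gap+T)\neq\emptyset$; then by \cref{prop:fix:inc}\ref{prop:fix:inc:i}, $\fix(\gap+T)$ contains the entire ray $y_0 - \RR_+\gap$ for any $y_0$ in it, and since $\gap \neq 0$ this ray contains infinitely many distinct points.

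For \ref{prop:RR:conv:i}, I would pick two distinct points $z_1, z_2 \in \fix(\gap+T)$. By \cref{prop:fix:inc}\ref{prop:fix:inc:ii:ii}, $(y_n)_{n\in\NN}$ is Fej\'er monotone with respect to $\fix(\gap+T)$, so each sequence $(|y_n - z_i|)_{n\in\NN}$ is nonincreasing and converges to some $\ell_i \ge 0$. Expanding $|y_n - z_i|^2 = y_n^2 - 2 z_i y_n + z_i^2$ and subtracting the two expressions eliminates the quadratic term:
\begin{equation*}
2(z_2 - z_1)\, y_n \;=\; |y_n - z_1|^2 - |y_n - z_2|^2 + z_2^2 - z_1^2 \;\longrightarrow\; \ell_1^2 - \ell_2^2 + z_2^2 - z_1^2.
\end{equation*}
Since $z_2 - z_1 \neq 0$, division yields convergence of $(y_n)$, proving \ref{prop:RR:conv:i}.

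For \ref{prop:RR:conv:ii}, set $f_n(x) := T^n x + n\gap$. Nonexpansivity of $T$ gives $|T^n x - T^n x'| \le |x - x'|$ for every $n$, and the shift constants $n\gap$ cancel, so $|f_n(x) - f_n(x')| \le |x - x'|$; passing to the limit proves that $f : x \mapsto \lim_n f_n(x)$ is nonexpansive. For \ref{prop:RR:conv:iii}, I would use the elementary characterization that $T : \RR \to \RR$ is firmly nonexpansive if and only if $T$ is nondecreasing and $1$-Lipschitz (immediate from the representation $T = \tfrac{1}{2}(\Id + R)$ with $R$ nonexpansive on $\RR$, or by direct manipulation of $(Tx-Tx')^2 \le (x-x')(Tx-Tx')$). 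Both properties are preserved under composition, so each $T^n$ is nondecreasing and $1$-Lipschitz; hence each $f_n$ is nondecreasing and $1$-Lipschitz, and both properties pass to the pointwise limit $f$, yielding firm nonexpansivity.

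The main obstacle is \ref{prop:RR:conv:i}: Fej\'er monotonicity by itself only yields boundedness of $(y_n)$ and convergence of the individual distances $|y_n - z|$, not of $y_n$ itself. The decisive one-dimensional trick is to use two distinct Fej\'er targets to eliminate the quadratic $y_n^2$ term; what supplies two distinct targets is precisely $\gap \neq 0$, which in turn comes from $\fix T = \emptyset$. In higher dimensions this argument breaks down --- two targets pin $y_n$ down only up to a hyperplane --- which presumably explains why this preparatory proposition is restricted to $X = \RR$.
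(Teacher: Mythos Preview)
Your proof is correct and follows essentially the same route as the paper. The only noteworthy difference is in \ref{prop:RR:conv:i}: the paper observes, via \cref{prop:fix:inc}\ref{prop:fix:inc:i}, that $\fix(\gap+T)$ contains an unbounded interval, hence has nonempty interior in $\RR$, and then invokes the standard fact (\cite[Proposition~5.10]{BC2011}) that a Fej\'er monotone sequence with respect to a set with nonempty interior converges. Your two-point polarization argument is a self-contained substitute for that citation --- in fact it is essentially how one proves such results in $\RR$ --- and your preliminary observation that $\gap\neq 0$ makes explicit something the paper leaves implicit. Parts \ref{prop:RR:conv:ii} and \ref{prop:RR:conv:iii} match the paper's argument almost verbatim; the characterization ``firmly nonexpansive on $\RR$ $\Leftrightarrow$ nondecreasing and $1$-Lipschitz'' is exactly the ``nonexpansive and monotone'' criterion the paper cites from \cite[Proposition~4.2(iv)]{BC2011}.
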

 \begin{proof}
 \ref{prop:RR:conv:i}:
 In view of
 \cref{prop:fix:inc}\ref{prop:fix:inc:ii:ii} 
 the sequence $(y_n)_{n\in \NN}$
    is F\'{e}jer monotone with respect to $\fix (\gap+T)$. Now by 
    \cref{prop:fix:inc}\ref{prop:fix:inc:i} we know that $
    \fix (\gap+T)$ contains an unbounded interval. 
    Since $X=\RR$ we conclude that
    $\intr  \fix (\gap+T) \neq \fady$. It follows from 
    \cite[Proposition~5.10]{BC2011} 
    that $(y_n)_{n\in \NN}$ converges.  
    
    \ref{prop:RR:conv:ii}:
    Let $y\in \RR$. Then 
    \begin{align}
    \Abs{\lim_{n\to \infty} (T^n x+n\gap)-\lim_{n\to \infty} (T^n y+n\gap)}
    &=\Abs{\lim_{n\to \infty} (T^n x+n\gap-T^n y-n\gap)}\nonumber\\
    &=\lim_{n\to \infty} \abs{T^n x-T^n y}
    \le\lim_{n\to \infty} \abs{x-y}= \abs{x-y}.
    \end{align}
     \ref{prop:RR:conv:iii}: It follows from 
    \cite[Proposition~4.2(iv)]{BC2011} that an operator is 
    firmly nonexpansive if and only if it is nonexpansive and 
    monotone.
    Therefore, in view of \ref{prop:RR:conv:ii}, 
    we need to check monotonicity.
    Without loss of generality let $y\in \RR$ such that $x\le y$. 
  Since $T$ is firmly nonexpansive, hence monotone, 
  one can verify that
   $(\forall n\in \NN) $ $T^nx\le T^ny$ and therefore
    $(\forall n\in \NN) $ $T^nx+n\gap\le T^ny+n\gap$.
    Now take the limit as $n\to \infty$. 
    \end{proof}
 
When $X=\RR$, 
it follows from \cref{prop:RR:conv}\ref{prop:RR:conv:i}
that the sequence $(T^nx+n\gap)_{n\in \NN}$
converges.
In view of \cref{prop:fix:inc}\ref{prop:fix:inc:ii:ii} 
the sequence $(T^nx+n\gap)_{n\in \NN}$ is 
F\'ejer monotone with respect to $\fix (\gap+T)$ 
which might suggest that the limit lies in 
$\fix (\gap+T)$. We show in the following example
 that this is not true in general.

 \begin{ex}\label{exmp:not:fejer}
Suppose that $X=\RR$ and that
\begin{equation}\label{ex:not:fejer}
T:\RR\to \RR
:x\mapsto
\begin{cases}
x-\alpha,&\text{~\rm if~}x\le \alpha;\\
0,&\text{~\rm if~}\alpha<x\le\beta;\\
x-\beta,&\text{~\rm if~}x> \beta,
\end{cases}
\end{equation}
 where $0<\alpha<\beta$.
 Then $T$ is firmly nonexpansive but \emph{not affine},
 $\gap=\alpha$, $\fix(\gap+T)=\left ]-\infty,\alpha\right]$,
$\fix T_{-\gap}=\left ]-\infty,0\right],$
and 
\begin{equation}\label{e:all:cases}
T^n+n\gap:\RR\to\RR:x\mapsto
\begin{cases}
x,&\text{~\rm if~}x\le \alpha;\\
\alpha,&\text{~\rm if~}\alpha<x\le \beta;\\
x-n(\beta-\alpha),
&\text{~\rm if~}x>\beta 
\text{~\rm and~} n\le \lfloor x/\beta \rfloor;\\
\min\stb{\alpha,x-\Big\lfloor  \tfrac{x}{\beta} \Big\rfloor\beta}
+\Big\lfloor  \tfrac{x}{\beta} \Big\rfloor\alpha,
&\text{~\rm if~}x>\beta \text{~\rm and~} 
n> \lfloor x/\beta \rfloor.
\end{cases}
\end{equation}
Consequently,
\begin{equation}\label{e:all:cases:lim}
\lim_{n\to \infty}(T^n+n\gap):\RR\to\RR:x\mapsto
\begin{cases}
x,&\text{~\rm if~}x\le \alpha;\\
\alpha,&\text{~\rm if~}\alpha<x\le \beta;\\
\min\stb{\alpha,x-\Big\lfloor  \tfrac{x}{\beta} \Big\rfloor \beta}
+\alpha\Big\lfloor \tfrac{x}{\beta} \Big\rfloor,
&\text{~\rm if~}x>\beta .
\end{cases}
\end{equation}
Therefore
for every $ x_0\in \RR$ the sequence 
$(T^nx_0+n\gap)_{n\in \NN}$ 
is eventually constant. 
However, 
if the starting point $x_0$ lies in the interval  ~$\left]\beta,\infty\right[$,
  then
$ \lim_{n\to \infty} 
T^nx_0+n\gap=\min\{\alpha,x-\lfloor  \tfrac{x}{\beta} \rfloor \beta\}
+\alpha\lfloor \tfrac{x}{\beta} \rfloor\not\in \fix (\gap+T)$.
\end{ex}
\begin{proof}
See \nameref{app:3}.
\end{proof}

\section{Affine nonexpansive operators}\label{S:aff:case}
In this section, we investigate properties of \emph{affine}
nonexpansive operators. This additional assumption allows for
stronger results than those obtained in the previous section.
We recall the following fact.
\begin{fact}{\rm (See \cite[Proposition~3.17]{BC2011}.)}\label{F:trans}
Let $S$ be a nonempty subset of $X$, and let $y \in X$. 
Then 
\begin{equation}
(\forall x\in X) ~~P_{y+S}x=y+P_S(x-y).
\end{equation}
\end{fact}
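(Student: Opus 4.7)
The plan is to reduce the statement to the defining variational characterization of the projection via a translation argument. Since $y+S$ is a nonempty (closed convex) subset of $X$ whenever $S$ is, the projection $P_{y+S}$ is well-defined, and $P_{y+S}x$ is the unique point $z\in y+S$ that minimizes $z\mapsto \norm{x-z}$.

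First I would parametrize elements of $y+S$ as $z=y+s$ with $s\in S$. Then for every such $z$ one has
\begin{equation*}
\norm{x-z}^2 = \norm{x-y-s}^2 = \norm{(x-y)-s}^2 .
\end{equation*}
Thus minimizing $z\mapsto\norm{x-z}^2$ over $z\in y+S$ is equivalent (by the bijection $s\mapsto y+s$ between $S$ and $y+S$) to minimizing $s\mapsto \norm{(x-y)-s}^2$ over $s\in S$. The unique minimizer of the latter problem is, by definition, $P_S(x-y)$. Consequently, the unique minimizer of the former is $z^{\star}=y+P_S(x-y)$, which is exactly the claimed identity $P_{y+S}x = y+P_S(x-y)$.

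There is no substantive obstacle here: the argument is a one-line change of variables, and the only thing to verify is that the bijection $s\mapsto y+s$ preserves the objective, which it does because translation is an isometry of $X$. If one wished to phrase the proof without appealing to uniqueness (e.g.\ if $S$ is only assumed nonempty and $P_S$ is set-valued), the very same calculation shows the set-valued equality $P_{y+S}x = y+P_S(x-y)$ directly. In the Hilbert space setting of the paper, where $S$ is understood to be nonempty closed convex so that $P_S$ is single-valued, this is precisely the cited content of \cite[Proposition~3.17]{BC2011}.
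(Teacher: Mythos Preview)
Your argument is correct: the translation $s\mapsto y+s$ is a bijection from $S$ onto $y+S$ and an isometry, so minimizing $\norm{x-\cdot}$ over $y+S$ is equivalent to minimizing $\norm{(x-y)-\cdot}$ over $S$, which yields the identity immediately (and, as you observe, the same computation works at the set-valued level when $S$ is merely nonempty). The paper itself does not supply a proof---this is recorded as a Fact and simply cited from \cite[Proposition~3.17]{BC2011}---so there is no in-paper argument to compare against; your proof is the standard one.
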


\begin{thm}\label{P:aff}
Let $\linop\colon X\to X$ be linear and nonexpansive, 
let $b\in X$, and suppose that
$T\colon X\to X\colon x\mapsto \linop x + b$. 
Suppose also that $\gap\in\ran(\Id-T)$,
and let $x\in X$.
Then the following hold:
\begin{enumerate}
\item\label{P:aff:i}
$\gap=P_{\fix \linop}(-b)\in \fix \linop=(\ran(\Id-\linop))^\perp$,
and $\gap\neq 0\iff b\not\in \ran(\Id-\linop)$.
\item\label{P:aff:ii}
$(\forall n\in \NN)$ 
$T^nx=\linop^nx+\sum_{k=0}^{n-1}\linop^kb$.
\item\label{P:aff:iii}
$(\forall n\in \NN)$ 
$T^nx+n\gap=\linop^nx+\sum_{k=0}^{n-1}\linop^kP_{\ran(\Id-\linop)}b$.
\item\label{P:aff:iv}
$(\forall n\in \NN)$ 
$(T_{-\gap})^{n}x=T^nx+n\gap$.
\item\label{P:aff:v}
$(\forall n\in \NN)$ 
$(T_{-\gap})^{n}x=(\gap+T)^nx$.
\item\label{P:aff:vi}
$\fix T_{-\gap}=-\gap+\fix T_{-\gap}
=-\gap+\fix (\gap+T)=\fix (\gap+T)$.
\item\label{P:aff:vii}
$\fix(T_{-\gap})=\fix(\gap+T)=\RR\gap+\fix(\gap+T)
=\RR\gap+\fix(T_{-\gap})$.
Consequently $\gap$ lies in the \emph{lineality space}
\footnote{For the definition and 
a detailed discussion of the 
lineality space, we refer the 
reader to \cite[page 65]{Rock70}.}
of 
$\fix (T_{-\gap})=\fix (\gap+T)$. 
\end{enumerate}
\end{thm}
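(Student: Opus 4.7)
The plan is to isolate the single structural identity $L\gap=\gap$ in (i) and derive everything else from it via short computations and inductions.

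For \ref{P:aff:i}, I would start from $\overline{\ran(\Id-T)}=\overline{\ran(\Id-L)}-b$ (immediate from $Tx=Lx+b$) and apply \cref{F:trans} to compute
\begin{equation*}
\gap=P_{\overline{\ran(\Id-T)}}0=-b+P_{\overline{\ran(\Id-L)}}b.
\end{equation*}
Using the orthogonal decomposition $X=\overline{\ran(\Id-L)}\oplus(\ran(\Id-L))^\perp$ and linearity of projection onto a closed subspace, this rearranges to $\gap=P_{(\ran(\Id-L))^\perp}(-b)$. The main obstacle is identifying $(\ran(\Id-L))^\perp=\ker(\Id-L^*)=\fix L^*$ with $\fix L$; this is the only place nonexpansiveness of $L$ really enters, via the chain $\|x\|^2=\langle Lx,x\rangle=\langle x,L^*x\rangle\le\|x\|\|L^*x\|\le\|x\|^2$ for $x\in\fix L$, which forces equality throughout and hence $L^*x=x$ (the reverse inclusion is symmetric, using $\|L^*\|\le 1$). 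The criterion $\gap\ne 0\iff b\notin\ran(\Id-L)$ then comes from \cref{gap:assmp:pre}: since $\gap\in\ran(\Id-T)$, we have $\gap=0$ iff $\fix T\ne\varnothing$ iff $b\in\ran(\Id-L)$.

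Item \ref{P:aff:ii} is a one-line induction. For \ref{P:aff:iii} I would combine (ii) with $L\gap=\gap$: since $\gap\in\ran(\Id-T)$ gives $b+\gap\in\ran(\Id-L)$ and the orthogonal decomposition of $b$ together with $P_{\fix L}b=-\gap$ from (i) yields $P_{\overline{\ran(\Id-L)}}b=b+\gap$, the symbol $P_{\ran(\Id-L)}b$ is well defined and equals $b+\gap$, so $\sum_{k=0}^{n-1}L^k P_{\ran(\Id-L)}b=\sum_{k=0}^{n-1}L^k b+n\gap$. The key observation unlocking \ref{P:aff:iv} and \ref{P:aff:v} is that
\begin{equation*}
T_{-\gap}x=T(x+\gap)=Lx+L\gap+b=Tx+\gap=(\gap+T)x,
\end{equation*}
so $T_{-\gap}$ and $\gap+T$ are literally the same affine map; a short induction then gives $(T_{-\gap})^{n+1}x=T((T_{-\gap})^nx)+\gap=T(T^nx+n\gap)+\gap=T^{n+1}x+(n+1)\gap$, settling both items simultaneously.

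For \ref{P:aff:vi}, $x\in\fix T_{-\gap}\iff Tx=x-\gap\iff x\in\fix(\gap+T)$, so $\fix T_{-\gap}=\fix(\gap+T)$; the other two equalities follow from \cref{lem:in:out:T}\ref{lem:in:out:T:i}. For \ref{P:aff:vii}, given any $c\in\RR$ and any $x\in\fix(\gap+T)$, the computation $T(x+c\gap)+\gap=Lx+cL\gap+b+\gap=(x-\gap)+(c+1)\gap=x+c\gap$ shows closure under translation by multiples of $\gap$, hence $\RR\gap+\fix(\gap+T)\subseteq\fix(\gap+T)$; the reverse inclusion (take $c=0$) together with (vi) yields the full chain and identifies $\gap$ as a lineality direction of the fixed-point set.
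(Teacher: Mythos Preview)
Your proof is correct and follows the paper's approach; the core identity $L\gap=\gap$ from (i) drives everything in both. Your organization is slightly cleaner in a few places: you note $T_{-\gap}=\gap+T$ once up front and handle (iv)--(v) in a single induction (the paper runs two separate inductions and only records this identity afterward in the proof of (vi)), you verify (vii) by a direct computation showing $x+c\gap\in\fix(\gap+T)$ rather than routing through \cref{prop:fix:inc}\ref{prop:fix:inc:i} and the affine-subspace structure, and you give a self-contained Cauchy--Schwarz argument for $\fix L=\fix L^*$ in place of the paper's citation to \cite[Lemma~2.1]{BDHP03}.
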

\begin{proof}
\ref{P:aff:i}:
Note that $\ran(\Id-T)=\ran (\Id-L)-b$
and hence
$\overline{\ran}(\Id-T)=\overline{\ran}(\Id-L)-b$. 
Therefore,
using \cref{F:trans} we have
$\gap=P_{\overline{\ran}(\Id-T)}0
=P_{-b+\overline{\ran}(\Id-L)}0=-b+P_{\overline{\ran}(\Id-L)}(0-(-b))
=-b+P_{\overline{\ran}(\Id-\linop)}b
$.
Using 
\cite[Fact~2.18(iv)]{BC2011} and 
\cite[Lemma~2.1]{BDHP03}, 
we learn that 
$\overline{\ran}(\Id-\linop)^\perp
=\ker(\Id-\linop^*)=\fix L^*=\fix \linop$,
and hence  
\begin{equation}
\gap
=(\Id-P_{\overline{\ran}(\Id-\linop)})(-b)
=P_{(\overline{\ran}(\Id-\linop))^{\perp}}(-b)
=P_{\fix \linop}(-b).
\end{equation}
Note that $\gap\neq 0\iff b\not\in \overline{\ran}(\Id-\linop)$.

\ref{P:aff:ii}:
We prove this by induction. 
When $n=0$ the conclusion is obviously true.
Now suppose that for some $n\in \NN$
it holds that 
\begin{equation}
T^nx=\linop^n x+\sum_{k=0}^{n-1}\linop^kb.
\end{equation}  
Then 
$T^{n+1}x=T(T^nx)
=T(\linop^n x+\sum_{k=0}^{n-1}\linop^kb)= 
\linop(\linop^n x+\sum_{k=0}^{n-1}\linop^kb)+b
=\linop^{n+1} x+\sum_{k=0}^{n}\linop^kb$,
as claimed.

\ref{P:aff:iii}: Note that 
$b=P_{{\overline{\ran}}(\Id-\linop)}b+P_{\fix \linop}b$.
Using \ref{P:aff:i} and \ref{P:aff:ii} yields
\begin{align*}
T^nx+n\gap&=\linop^n x+\sum_{k=0}^{n-1}(\linop^kb+\gap)
=\linop^n x+\sum_{k=0}^{n-1}\bk{\linop^kb+\linop^k \gap}\\
&=\linop^n x+\sum_{k=0}^{n-1}\bk{\linop^kb-\linop^kP_{\fix \linop}b}
=\linop^n x+\sum_{k=0}^{n-1}\linop^k(\Id-P_{\fix \linop})b\\
&=\linop^n x+\sum_{k=0}^{n-1}\linop^kP_{\overline{\ran}(\Id-\linop)}b.
\end{align*}

\ref{P:aff:iv}: 
We prove this by induction. Note that 
by \ref{P:aff:i} $\gap\in \fix \linop$, hence 
$\linop\gap=\gap$.
When $n=0$ we have $(T_{-\gap})^0 x=x=
T^0x+0\cdot\gap$.
Now suppose that for some $n\in \NN$
it holds that 
$(T_{-\gap})^{n}x=T^nx+n\gap$.
Then $(T_{-\gap})^{n+1}x
=T_{-\gap}(T^nx+n\gap)=T(T^nx+n\gap+\gap)
=\linop(T^nx)+\linop((n+1)\gap)+b
=T^{n+1}x+(n+1)\gap$.

\ref{P:aff:v} We use induction again.
The base case is obviously true. 
Now suppose that for some $n\in \NN$
it holds that 
$
(\gap+T)^{n}x=T^nx+n\gap.
$
Then $(\gap+T)^{n+1}x=\gap+T(\gap+T)^{n}x
=\gap+T(T^nx+n\gap)$
$=\gap+L(T^nx+n\gap)+b
=\gap+LT^nx+n\gap+b=LT^nx+b+(n+1)\gap$
$=T^{n+1}x+(n+1)\gap$.
Now combine with \ref{P:aff:iv}.

\ref{P:aff:vi}:
Using \ref{P:aff:v} with $n=1$ we have $T_{-\gap}=\gap+T$.
Now apply \cref{lem:in:out:T}\ref{lem:in:out:T:i}.

\ref{P:aff:vii}:
Using \ref{P:aff:vi} 
and the assumption that $T$ is an affine operator,
we have $\fix(T_{-\gap})=\fix(\gap+T)$ is an affine subspace. 
Now let $y_0\in \fix(T_{-\gap})=\fix(\gap+T)$.
Using \cref{prop:fix:inc}\ref{prop:fix:inc:i}
we have $-\RR_{+}\gap\subseteq \fix (\gap +T)-y_0
=\parl  \fix (\gap +T)$ and therefore 
$\RR\gap \subseteq \parl \fix (\gap +T)$.
Hence $y_0+\RR\gap\subseteq \fix (\gap +T)$
which yields $\fix (\gap +T)+\RR\gap\subseteq \fix (\gap +T)$.
Since the opposite inclusion is obviously true we conclude 
that  
\ref{P:aff:vii} holds.
\end{proof}

Suppose $T$ is nonexpansive but not affine.
\cref{P:aff} might suggest that, for every $x\in X$,
 the sequences $\bk{\T^n x+n\gap}_{n\in\NN}$, 
 $\bk{T_{-\gap}^{n}x}_{n\in\NN}$ and $\bk{(\gap+T)^n x}_{n\in\NN}$ 
 coincide, and consequently $\bk{\T^n x+n\gap}_{n\in\NN}$
 is a sequence of iterates of a nonexpansive operator.
 Interestingly, this is not the case as we illustrate now. 
\begin{ex}\label{e:asym}
Suppose that $X=\RR$ and let $\beta>0$. 
Suppose that 
\begin{equation}\label{eq:ex:long}
\T\colon\RR\to\RR: x\mapsto\begin{cases}
x-\beta,&x\le\beta;\\
\alpha\bk{x-\beta},& x>\beta,
\end{cases}
\end{equation}
where $0<\alpha<1$.
Then $\fix T=\fady$, $\gap=\beta$,
for every $ n\in \NN$
\begin{equation}\label{ex:inn:sft}
\bk{T_{-\gap}}^n :\RR\to \RR: x\mapsto
\alpha^n\max\stb{x,0}+\min\stb{x,0},
\end{equation}
\begin{equation}\label{ex:out:sft}
(\gap+T)^n :\RR\to \RR: x\mapsto
\alpha^n\max\stb{x-\beta,0}+\min\stb{x,\beta},
\end{equation}
and
\begin{equation}\label{ex:nv:sft}
\T^n +n\gap:\RR\to \RR: x\mapsto\begin{cases}
x,&\text{~\rm if~}x\le \beta;\\
\alpha^n x-\bk{\frac{\alpha(1-\alpha^{n})}{1-\alpha}}\beta
+n\beta,&\text{~\rm if~} x>\beta, n< q(x);\\
\alpha^{q(x)} x-\bk{\frac{\alpha(1-\alpha^{q(x)})}{1-\alpha}}\beta
+q(x)\beta,&\text{~\rm if~} x>\beta, n\ge q(x),
\end{cases}
\end{equation}
where $q(x):\RR\to \NN
:x\mapsto\left\lceil {\log_\alpha \frac{\beta}
{\alpha\beta+(1-\alpha)x}}\right \rceil $.
Consequently,
\begin{equation}\label{ex:inn:sft:lim}
(\forall x\in \RR)\quad \lim_{n\to \infty}\bk{T_{-\gap}}^nx=
\min\stb{x,0},
\end{equation}
\begin{equation}\label{ex:out:sft:lim}
(\forall x\in \RR)\quad \lim_{n\to \infty}(\gap+T)^{n}  x
=\min\stb{x,\beta},
\end{equation}
and
 \begin{equation}\label{ex:nv:sft:lim}
(\forall x\in \RR)\quad
\lim_{n\to \infty}\bk{T^n x+n\gap} =\begin{cases}
x,&\text{~\rm if~}x\le\beta;\\
\alpha^{q(x) }x-
\bk{\frac{\alpha(1-\alpha^{q(x)})}{1-\alpha}}\beta
+q(x)\beta,&\text{~\rm if~} x>\beta.
\end{cases}
\end{equation}
Moreover, there is no operator $S:\RR\to \RR$
such that for every $x\in \RR$  and for every $ n\in \NN$ 
 we have $S^n x=T^n x+n\gap$.
 \end{ex}
  \begin{proof}
  See \nameref{app:D}.
  \end{proof}
\begin{figure}[H]
 \centering
 \includegraphics[scale=0.23]{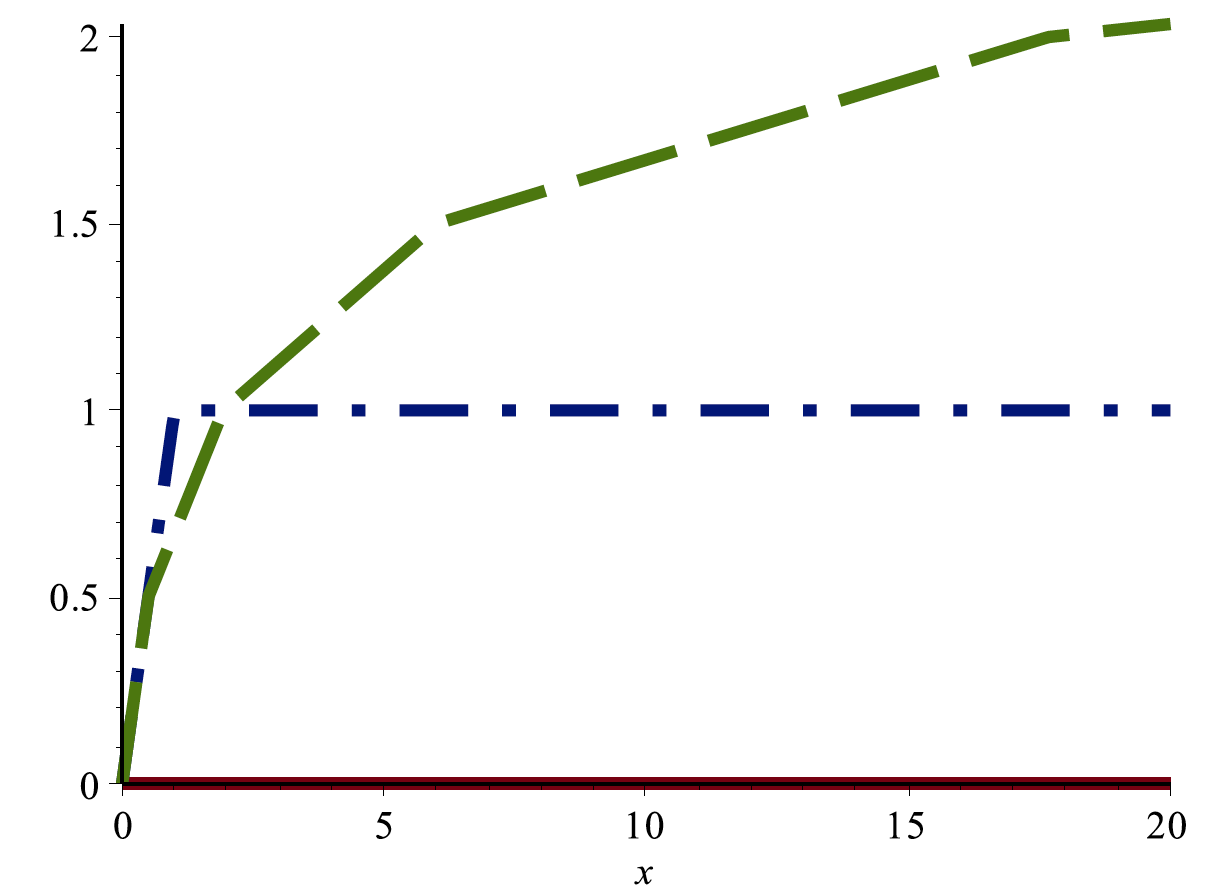}
 \caption{The solid curve represents 
 $\lim_{n\to \infty}(T_{-\gap})^n x$,
the dashed dotted curve represents 
$\lim_{n\to \infty}(\gap+T)^n x$,
and the dashed curve represents 
$\lim_{n\to \infty}T^nx+n\gap$, 
when $\alpha=0.5$ and $\beta=1$.}
\label{fig:com:lim}
\end{figure}
Figure \ref{fig:com:lim} provides a plot of the functions 
defined by \cref{ex:inn:sft:lim}, 
\cref{ex:out:sft:lim} and \cref{ex:nv:sft:lim} that 
illustrates that they are pairwise distinct.
\section{The Douglas--Rachford operator for two affine subspaces}
\label{S:main}
 Unless otherwise stated we assume 
from now on that
\begin{empheq}[box=\mybluebox]{equation*}
A \text{~~and~~} B \text{~~are maximally 
monotone operators on~~}X.
\end{empheq} 
The Attouch--Th\'{e}ra dual pair of $(A,B)$
(see \cite{AT})
is the pair $(A,B)^*:=(A^{-1},B^{-\ovee})$,
where 
\begin{equation}
A^{\ovee}:= (-\Id)\circ A\circ(-\Id)\quad{\text{and}}
\quad A^{-\ovee}:=(A^{-1})^\ovee=(A^\ovee)^{-1}.
\end{equation}
We shall use
\begin{equation}
Z:=Z_{(A,B)}=(A+B)^{-1}(0)
\qquad\text{and }\qquad 
K:=K_{(A,B)}=(A^{-1}+B^{-\ovee})^{-1}(0),
\end{equation}
to denote the primal and dual solutions respectively
(see e.g. \cite{JAT2012}).

The \emph{normal problem} associated 
with the ordered pair 
$(A,B)$ (see \cite{Sicon2014}) is to find $x\in X$
such that
\begin{equation}
0\in {_{\gap}A} x+B_{\gap}x=Ax-\gap+B(x-\gap), 
\end{equation}
{where} 
\begin{empheq}[box=\mybluebox]{equation}
\gap=P_{\overline{\ran}(\Id-T)},
\end{empheq}
and $T=\TAB$ is defined by \cref{def:T}.
We recall 
(see \cite[Lemma~2.6(iii)]{Comb04} and \cite[Corollary~4.9]{JAT2012})
that
\begin{equation}\label{Fact:collect:Z:fix}
Z=J_A(\fix T) \quad\text{and}\quad K=(\Id-J_A)(\fix T),
\end{equation}
and that $T$ is self-dual 
(see \cite[Lemma 3.6 on page 133]{EckThesis}
and \cite[Corollary~4.3]{JAT2012}), 
i.e.,  
\begin{equation}\label{Fact:collect:T:self}
T_{(A,B)}=T_{(A,B)^*}
=T_{(A^{-1},B^{-\ovee})}.
\end{equation}
The \emph{normal pair} associated with the ordered pair $(A,B)$
is the pair $(_{\gap}A, B_{\gap})$ 
and the \emph{normal Douglas-Rachford operator} 
is $T_{\bk{_{\gap}A, B_{\gap}}}$. Using  
\cite[Proposition~2.24]{Sicon2014}
we have
\begin{empheq}[box=\mybluebox]{equation}
\label{Fact:collect:normal:T}
T_{(_{\gap}A, B_{\gap})}=T_{-\gap}.
\end{empheq}  
The set of \emph{normal solutions} 
is $Z_{\gap}:=Z_{\bk{_{\gap}A, B_{\gap}}}$ 
and 
the set of \emph{dual normal solutions} 
is $K_{\gap}:=K_{\bk{_{\gap}A, B_{\gap}}}$. 
\begin{lem}\label{Fact:collect}
The following hold:
\begin{enumerate}
\item\label{Fact:collect:normal:Z}
$Z_{\gap}=J_{_{\gap}A}
(\fix (T_{-\gap}))
=J_{(-\gap+A)}(\fix (T_{-\gap}))=J_{A}(\fix (T_{-\gap})+\gap)=J_A(\fix(\gap+ T))$.
\item\label{Fact:collect:normal:K}
$K_{\gap}=(\Id-J_{_{\gap}A})
(\fix (T_{-\gap}))
=(\Id-J_{(-\gap+A)})(\fix (T_{-\gap}))$.
\item\label{Fact:collect:non:fady:Z}
 $K_{\gap}\neq \fady\iff Z_{\gap}\neq \fady\iff \gap\in \ran(\Id-T)$.
\end{enumerate}
\end{lem}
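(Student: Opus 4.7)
The plan is to derive all three parts by applying the already established identities \eqref{Fact:collect:Z:fix} and \eqref{Fact:collect:normal:T} to the normal pair $(_{\gap}A, B_{\gap})$, and then translate between the three natural descriptions of the fixed-point set using Lemma~\ref{lem:in:out:T}\ref{lem:in:out:T:i}.

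First, for \ref{Fact:collect:normal:Z}, applying \eqref{Fact:collect:Z:fix} to the pair $(_{\gap}A, B_{\gap})$ gives
\begin{equation*}
Z_{\gap} = J_{_{\gap}A}\bigl(\fix T_{(_{\gap}A, B_{\gap})}\bigr),
\end{equation*}
and \eqref{Fact:collect:normal:T} rewrites the fixed-point set as $\fix (T_{-\gap})$. The outer shift definition \eqref{eq:def:in:out} gives $_{\gap}A = -\gap + A$, whence $J_{_{\gap}A} = J_{-\gap + A}$ by inspection. The key identity I need is that for every $u \in X$,
\begin{equation*}
J_{_{\gap}A}(u) = J_A(u+\gap).
\end{equation*}
To verify this, write $y = J_{_{\gap}A}(u)$: the resolvent equation reads $u \in y + (_{\gap}A)(y) = y - \gap + Ay$, equivalently $u + \gap \in (\Id + A)(y)$, i.e.\ $y = J_A(u+\gap)$. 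Applying this to each element of $\fix(T_{-\gap})$ gives $J_{_{\gap}A}(\fix T_{-\gap}) = J_A(\fix(T_{-\gap}) + \gap)$, and Lemma~\ref{lem:in:out:T}\ref{lem:in:out:T:i} yields $\fix(T_{-\gap}) + \gap = \fix(\gap + T)$, completing the chain of equalities.

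For \ref{Fact:collect:normal:K}, I follow exactly the same template: \eqref{Fact:collect:Z:fix} applied to $(_{\gap}A, B_{\gap})$ gives $K_{\gap} = (\Id - J_{_{\gap}A})(\fix T_{(_{\gap}A, B_{\gap})})$, then \eqref{Fact:collect:normal:T} replaces the fixed-point set with $\fix (T_{-\gap})$, and $_{\gap}A = -\gap + A$ delivers the second equality.

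Finally, for \ref{Fact:collect:non:fady:Z}, I observe that $J_{_{\gap}A}$ is single-valued with full domain (since $_{\gap}A$ is maximally monotone), so $Z_{\gap} \neq \fady$ is equivalent to $\fix(T_{-\gap}) \neq \fady$, and similarly for $K_{\gap}$ via $\Id - J_{_{\gap}A}$. Lemma~\ref{lem:in:out:T}\ref{lem:in:out:T:i:ii} with $w = \gap$ then gives $\fix(T_{-\gap}) \neq \fady \iff \gap \in \ran(\Id - T)$, closing the equivalence.

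The only mildly delicate step is the resolvent-of-shift identity $J_{_{\gap}A}(\cdot) = J_A(\cdot + \gap)$; everything else is bookkeeping from the two cited identities and Lemma~\ref{lem:in:out:T}.
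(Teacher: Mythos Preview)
Your proof is correct and follows essentially the same route as the paper: apply \eqref{Fact:collect:Z:fix} and \eqref{Fact:collect:normal:T} to the normal pair, then use the outer-shift definition together with Lemma~\ref{lem:in:out:T}. The only cosmetic differences are that the paper cites \cite[Proposition~23.15(ii)]{BC2011} for the resolvent-shift identity $J_{-\gap+A}(\cdot)=J_A(\cdot+\gap)$ (which you verify by hand) and invokes \cite[Proposition~2.4(v)]{JAT2012} for the equivalence $Z_{\gap}\neq\fady\iff K_{\gap}\neq\fady$ (which you obtain directly from the full domain of the resolvent).
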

\begin{proof}
\ref{Fact:collect:normal:Z}:
Apply \cref{Fact:collect:Z:fix} to the normal pair
$(_{\gap}A, B_{\gap})$ and use \cref{Fact:collect:normal:T}
and \cref{eq:def:in:out}. 
Now apply 
\cite[Proposition~23.15(ii)]{BC2011}.
The last equality follows from \cref{lem:in:out:T}\ref{lem:in:out:T:i}.
\ref{Fact:collect:normal:K}:
Apply \cref{Fact:collect:Z:fix} to the normal pair
$(_{\gap}A, B_{\gap})$ then use \cref{Fact:collect:normal:T}
and \cite[Proposition~23.15(iii)]{BC2011}. 
\ref{Fact:collect:non:fady:Z}: 
The first equivalence follows from applying 
\cite[Proposition~2.4(v)]{JAT2012} to the normal
pair $(_{\gap}A, B_{\gap})$.
Now combine
\cref{lem:in:out:T}\ref{lem:in:out:T:i:ii} 
and \ref{Fact:collect:normal:Z}.
\end{proof}

In the following we assume that 
 \begin{empheq}[box=\mybluebox]{equation}
\label{gap:assmp}
\gap=P_{\overline{\ran}{(\Id-T)}}0\in {{\ran}\bk{\Id-T}},
\end{empheq}
that 
\begin{empheq}[box=\mybluebox]{equation}\label{assm:U:V:sets}
U \text{~~and~~} V \text{~~are 
nonempty closed convex subsets of~~} X
\end{empheq}
 and that
 \begin{empheq}[box=\mybluebox]{equation}\label{assm:U:V:op}
A=N_U \text{~~and~~} B=N_V.
\end{empheq}
Using \cite[Example~23.4]{BC2011},
\cref{def:T} becomes
\begin{empheq}[box=\mybluebox]{equation}
\label{def:T:cones}
\DRS{U}{V}:=T_{(N_U,N_V)}=\Id-P_U+P_VR_U,
\end{empheq} 
where $R_U=2P_U-\Id$.
In this case (see \cite[Proposition~3.16]{Sicon2014})
 \begin{equation}
\gap=P_{\overline{U-V}}0,
\end{equation}
or equivalently 
\begin{equation}\label{rec:v}
-\gap\in N_{\overline{U-V}}(\gap).
\end{equation}
The normal problem now is to find $x\in X$
such that
\begin{equation}
0\in N_Ux-\gap+N_V(x-\gap).
\end{equation}

\begin{lem}\label{l:nor:shf}
Let $ w\in X$. Then the following hold:
\begin{enumerate}
\item\label{l:nor:shf:iii} 
$ J_{-w+N_U}=J_{N_U}(\cdot+w)
=P_U(\cdot+w)$.
\item\label{l:nor:shf:ii} 
$ J_{N_U(\cdot-w)}=w+J_{N_U}(\cdot-w)
=w+P_U(\cdot-w)$.
\item\label{l:nor:shf:i}
 $N_V(\cdot-w)=N_{w+V} $.
\item\label{l:nor:shf:iv}
Suppose that $U$ is an affine subspace and and that $w\in (\parl U)^\perp$.
Then $(\forall \alpha\in \RR)$$(\forall x\in X)$
$P_U (x+\alpha w)=
P_Ux$.
\end{enumerate}
\end{lem}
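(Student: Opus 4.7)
The plan is to verify all four items by direct unfolding of definitions, so no serious obstacle is anticipated: each statement is a translation identity relating a resolvent, normal cone, or projector to a shifted counterpart. The main ingredients are the characterization $y = J_M x \iff x \in y + My$ for a maximally monotone $M$, the identity $J_{N_C} = P_C$ for a nonempty closed convex $C$, and, for \ref{l:nor:shf:iv}, the affine projection formula $P_U = u_0 + P_{\parl U}(\,\cdot\, - u_0)$ that follows from \cref{F:trans}.

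For \ref{l:nor:shf:i} I would fix $x,y \in X$ and chase equivalences: $y = J_{-w+N_U}\,x$ iff $x \in y + (-w+N_U)(y) = y - w + N_U(y)$, i.e., $x+w \in y + N_U(y)$, which is $y = J_{N_U}(x+w) = P_U(x+w)$. Part \ref{l:nor:shf:ii} is the symmetric calculation: $y = J_{N_U(\cdot-w)}\,x$ iff $x \in y + N_U(y-w)$; the substitution $z := y-w$ converts this into $x - w \in z + N_U(z)$, giving $z = P_U(x-w)$ and hence $y = w + P_U(x-w)$.

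For \ref{l:nor:shf:iii} I would expand the normal cone pointwise: $u \in N_{w+V}(y)$ iff $\innp{u,\,v - y} \le 0$ for every $v \in w+V$; writing $v = w + v'$ with $v' \in V$ rewrites this as $\innp{u,\,v' - (y-w)} \le 0$ for every $v' \in V$, which is exactly $u \in N_V(y-w)$. For \ref{l:nor:shf:iv} I would pick $u_0 \in U$ and set $L := \parl U$, so that $U = u_0 + L$ with $L$ a closed linear subspace. By \cref{F:trans} we have $P_U(\cdot) = u_0 + P_L(\cdot - u_0)$, and since $w \in L^\perp$, linearity of $P_L$ gives $P_L(\alpha w) = 0$, whence $P_L(x + \alpha w - u_0) = P_L(x - u_0) + P_L(\alpha w) = P_L(x - u_0)$, yielding $P_U(x + \alpha w) = P_U x$.

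The main obstacle is essentially non-existent: each item reduces to a one-line substitution. The only place where mild care is needed is \ref{l:nor:shf:i}--\ref{l:nor:shf:ii}, where one must handle the set-valued inclusion cleanly and invoke $J_{N_U} = P_U$ to identify the resolvent with the projector; the rest is bookkeeping.
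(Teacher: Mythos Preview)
Your argument is correct and essentially matches the paper's: for the two resolvent identities the paper simply cites \cite[Proposition~23.15 and Example~23.4]{BC2011} rather than writing out the inclusion chase you give, for the normal-cone identity it takes the one-line indicator-function route $\iota_V(\cdot-w)=\iota_{w+V}\Rightarrow N_V(\cdot-w)=\pt\iota_{w+V}=N_{w+V}$ instead of expanding the defining inequality, and for the affine projection identity the two proofs are identical. One cosmetic slip: you have the cross-references for the first and third items swapped --- in the lemma the label ending in \texttt{iii} is attached to the \emph{first} item (the one about $J_{-w+N_U}$) and the label ending in \texttt{i} to the \emph{third} (the one about $N_V(\cdot-w)$), so as written your references \ref{l:nor:shf:i} and \ref{l:nor:shf:iii} would compile interchanged.
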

\begin{proof}
 \ref{l:nor:shf:iii}
and\ref{l:nor:shf:ii}: 
See \cite[Proposition~23.15(ii) and (iii) and Example~23.4]{BC2011}.
\ref{l:nor:shf:i}: One can easily verify that 
$(\forall w\in X)$ we have $\iota_{V}(\cdot-w)=\iota_{w+V}$.
Therefore $N_V(\cdot-w)=\pt \iota_{V}(\cdot-w)
=\pt \iota_{w+V}=N_{w+V}$.
\ref{l:nor:shf:iv}: Let $a\in U$. 
Then $U=a+\parl U$. 
Using \cref{F:trans}, we have
$(\forall \alpha\in \RR)$$(\forall x\in X)$
$P_U (x+\alpha w)=P_{a+\parl U} (x+\alpha w)
=a+P_{\parl U} (x+\alpha w-a)
=a+P_{\parl U} (x-a)+\alpha P_{\parl U} w
=a+P_{\parl U} (x-a)=P_{a+\parl U }x=P_Ux$.
\end{proof}

\begin{prop}\label{aff:sh:con}
Suppose that $U$ and $V$ are closed affine subspaces
of $X$ and  
that $T=\DRS{U}{V}$. 
Then the following hold:
\begin{enumerate}
\item\label{aff:sh:con:i}
T is affine and $T=\Id-P_U-P_V+2P_VP_U$. 
\item\label{aff:sh:con:i:iii}
$\gap\in (\parl U)^\perp\cap(\parl V)^\perp.$
\item\label{aff:sh:con:i:i}
$(\forall x\in X)$ $(\forall \alpha\in \RR)$ $P_Ux=P_U(x+\alpha\gap)$.
 \item\label{aff:sh:con:i:i:V}
$(\forall x\in X)$ $(\forall \alpha\in \RR)$ 
$P_Vx=P_V(x+\alpha\gap)$.
\item\label{aff:sh:con:i:ii} 
$T_{-\gap}=\gap+T=T_{N_U,N_V(\cdot-\gap)}
=\DRS{U}{\gap+V}$.
\item\label{aff:sh:con:iii:i}
$Z_{\gap}=U\cap(\gap+V)$.
\item\label{aff:sh:con:iii} %\cref{aff:sh:con}\ref{aff:sh:con:iii}
$K_{\gap}= (\parl U)^\perp\cap(\parl V)^\perp$.
\item\label{aff:sh:con:iv}  
$\fix(T_{-\gap})
=\fix (\gap+T)=Z_{\gap}+K_{\gap}
=(U\cap(\gap+V))
+( (\parl U)^\perp\cap(\parl V)^\perp)$.
\end{enumerate}
\end{prop}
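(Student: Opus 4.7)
The plan is to dispatch the eight items sequentially, each with a short argument anchored in the preparatory results.

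For \ref{aff:sh:con:i}, I would substitute $R_U = 2P_U - \Id$ into \cref{def:T:cones} to obtain $T = \Id - P_U - P_V + 2P_VP_U$; affineness of $U$ and $V$ makes $P_U,P_V$ affine, hence so is $T$. For \ref{aff:sh:con:i:iii}, \cref{rec:v} gives $-\gap \in N_{\overline{U-V}}(\gap)$, and since $U-V$ is an affine translate of the subspace $\parl U + \parl V$, the normal cone at any point of its closure equals $(\parl U + \parl V)^\perp = (\parl U)^\perp \cap (\parl V)^\perp$, proving \ref{aff:sh:con:i:iii}. Items \ref{aff:sh:con:i:i} and \ref{aff:sh:con:i:i:V} are then immediate applications of \cref{l:nor:shf}\ref{l:nor:shf:iv} with $w = \gap$.

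For \ref{aff:sh:con:i:ii}, \ref{aff:sh:con:i} together with \cref{gap:assmp} places $T$ in the setting of \cref{P:aff}, so \cref{P:aff}\ref{P:aff:v} (with $n=1$) yields $T_{-\gap} = \gap + T$. Next, \cref{l:nor:shf}\ref{l:nor:shf:i} gives $N_V(\cdot - \gap) = N_{\gap + V}$, so $T_{(N_U,N_V(\cdot-\gap))} = \DRS{U}{\gap + V}$ by definition of the DR operator. To identify this with $\gap + T$, I would expand $\DRS{U}{\gap+V} = \Id - P_U + P_{\gap+V}R_U$, rewrite $P_{\gap+V} = \gap + P_V(\cdot - \gap)$ via \cref{F:trans}, and collapse the resulting expression using the $\gap$-shift invariance from \ref{aff:sh:con:i:i} and \ref{aff:sh:con:i:i:V}.

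Items \ref{aff:sh:con:iii:i}--\ref{aff:sh:con:iv} all rely on \cref{Fact:collect}. For \ref{aff:sh:con:iii:i}, unfolding the definition shows $x \in Z_\gap$ iff $\gap \in N_U x + N_{\gap+V}x$; emptiness of the normal cones outside each set forces $x \in U \cap (\gap+V)$, and for any such $x$ the inclusion holds because \ref{aff:sh:con:i:iii} puts $\gap$ into $(\parl U)^\perp = N_U x$. For \ref{aff:sh:con:iii}, \cref{l:nor:shf}\ref{l:nor:shf:iii} combined with \ref{aff:sh:con:i:i} yields $J_{{_\gap}N_U} = P_U$, so \cref{Fact:collect}\ref{Fact:collect:normal:K} reduces $K_\gap$ to $(\Id - P_U)(\fix \DRS{U}{\gap+V})$. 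The inclusion ``$\subseteq$'' follows by reading off the fixed-point identity $P_U y = P_{\gap+V}R_U y$: the residual $y - P_U y$ lies in $(\parl U)^\perp$ by definition of projection, and in $(\parl V)^\perp$ via $R_U y - P_U y = -(y - P_U y)$. The reverse inclusion is shown constructively: for $k \in (\parl U)^\perp \cap (\parl V)^\perp$ and $z \in Z_\gap$ (non-empty by \cref{gap:assmp} and \cref{Fact:collect}\ref{Fact:collect:non:fady:Z}), direct computation gives $P_U(z+k)=z$, $R_U(z+k)=z-k$, and $P_{\gap+V}(z-k)=z$, so $z+k \in \fix\DRS{U}{\gap+V}$ and $(\Id - P_U)(z+k)=k$. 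The same decomposition proves \ref{aff:sh:con:iv}: every $y \in \fix T_{-\gap}$ splits as $P_U y + (y - P_U y) \in Z_\gap + K_\gap$, while the constructive argument gives the reverse inclusion.

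The main obstacle is \ref{aff:sh:con:iii}, where a single fixed-point equation must be squeezed to yield two separate orthogonality conditions, and where the reverse inclusion demands an explicit preimage inside $\fix\DRS{U}{\gap+V}$. Both steps rely crucially on the affine structure of $U$ and $V$ and on the $\gap$-shift invariance of $P_U$ and $P_V$ established in \ref{aff:sh:con:i:i}--\ref{aff:sh:con:i:i:V}; without affineness, residuals under the projectors need not sit in the respective perpendicular spaces, and the constructive preimage $z+k$ would not satisfy the fixed-point equation.
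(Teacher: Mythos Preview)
Your proof is correct and largely parallels the paper for items \ref{aff:sh:con:i}--\ref{aff:sh:con:i:ii}, but diverges meaningfully for \ref{aff:sh:con:i:iii}, \ref{aff:sh:con:iii:i}, \ref{aff:sh:con:iii} and \ref{aff:sh:con:iv}. For \ref{aff:sh:con:i:iii} the paper quotes \cite[Proposition~2.7~\&~Remark~2.8(ii)]{BCL04} to place $\gap$ in $(\rec U)^\oplus\cap(\rec V)^\ominus$ and then identifies this with $(\parl U)^\perp\cap(\parl V)^\perp$, whereas you obtain the same conclusion directly from \cref{rec:v} by computing the normal cone of the affine subspace $\overline{U-V}$. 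For \ref{aff:sh:con:iii:i} the paper simply cites \cite[Proposition~3.16]{Sicon2014}; you instead unfold the definition of $Z_\gap$ and use \ref{aff:sh:con:i:iii}. The largest difference is in \ref{aff:sh:con:iii} and \ref{aff:sh:con:iv}: the paper invokes paramonotonicity of normal cone operators together with \cite[Remark~5.4 and Corollary~5.5]{JAT2012} to obtain $K_\gap=(-\gap+N_Uz)\cap(-N_V(z-\gap))$ and the decomposition $\fix(T_{-\gap})=Z_\gap+K_\gap$ in one stroke, while you work directly with the fixed-point equation $P_Uy=P_{\gap+V}R_Uy$ and the explicit preimage $z+k$. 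Your route is more self-contained and elementary, avoiding the external paramonotonicity machinery; the paper's route is shorter on the page and shows that \ref{aff:sh:con:iii}--\ref{aff:sh:con:iv} are instances of a general structural result rather than ad hoc computations.
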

\begin{proof}
\ref{aff:sh:con:i}: Note that 
$J_A=P_U$ and $J_B=P_V$ are
affine (see e.g. \cite[Corollary~3.20(i)]{BC2011}). 
Using \cref{def:T:cones} 
we have 
$T=\Id-P_U+P_V(2P_U-\Id)=\Id-P_U+2P_VP_U-P_V$.
Since the class of affine
operators is closed under addition, 
subtraction and composition
we deduce that  $T$ is affine.

\ref{aff:sh:con:i:iii}:
It follows from \cite[Proposition~2.7~\&~Remark~2.8(ii)]{BCL04}
that 
$
\gap\in (\rec U)^\oplus\cap(\rec V)^\ominus
=(\parl U)^\perp\cap(\parl V)^\perp,
$
where the last equality follows from
\cite[Proposition~6.22 and Proposition~6.23(v)]{BC2011}.

\ref{aff:sh:con:i:i} and \ref{aff:sh:con:i:i:V}:
Combine \ref{aff:sh:con:i:iii} with 
\cref{l:nor:shf}\ref{l:nor:shf:iv}. 

\ref{aff:sh:con:i:ii}  
 It follows from \ref{aff:sh:con:i:i:V} with $\alpha$ replaced by
 $-1$, and \cref{l:nor:shf}\ref{l:nor:shf:ii}
that $J_{N_V(\cdot-\gap)}=\gap+P_V(\cdot-\gap)=\gap+P_V$.
Consequently, using  \cref{P:aff}\ref{P:aff:v}
 and \cref{def:T}
we have  
$T_{-\gap}=\gap+T =\Id-P_U+\gap+P_VR_U
=T_{N_U,N_V(\cdot-\gap)}$.
Finally,
\cref{l:nor:shf}\ref{l:nor:shf:i} 
implies that
$T_{N_U,N_V(\cdot-\gap)}=T_{N_U,N_{\gap+V}}=T_{U,\gap+V}$.

\ref{aff:sh:con:iii:i}: See \cite[Proposition~3.16]{Sicon2014}.

\ref{aff:sh:con:iii}: 
Let $z\in U\cap(\gap +V)=Z_{\gap}$
and note that, as subdifferential operators, 
$N_U$ and $N_V$ are 
paramonotone (see, e.g., \cite{Iusem98}) and 
so are the translated operators 
$-\gap+N_U$ and $N_V(\cdot-\gap)$.
 Therefore, in view of
\cite[Remark~5.4]{JAT2012}
and \ref{aff:sh:con:i:iii} we have 
\begin{align}
K_{\gap}&=(-\gap+N_U z)\cap(-N_V (z-\gap))
=(-\gap+(\parl U)^\perp)\cap(\parl V)^\perp\nonumber\\
&=(\parl U)^\perp\cap(\parl V)^\perp.
\end{align}
\ref{aff:sh:con:iv}:
Since 
$-\gap+N_U$ and $N_V(\cdot-\gap)$ are paramonotone,
it follows from \ref{aff:sh:con:i:ii}, \ref{aff:sh:con:iii} 
and \cite[Corollary~5.5]{JAT2012}
applied to the normal pair $(_{\gap}A, B_{\gap})$
that
$\fix(T_{-\gap})=\fix(\gap +T)=Z_{\gap}+K_{\gap}$. 
Now combine with \ref{aff:sh:con:iii:i} 
and \ref{aff:sh:con:iii}. 
 \end{proof}
 
We are now ready for our main result.
It illustrates that, even in the inconsistent case, 
the ``shadow sequence" $(P_UT^nx)_{n\in \NN}$ behaves 
extremely well because it converges to a normal solution
\emph{without prior knowledge} of the infimal displacement
vector.
The proof of \cref{thm:main:aff} relies on the work leading up to
this point as well as the convergence analysis of the consistent
case in \cite{JAT2014}.

\begin{thm}[\bf{Douglas-Rachford algorithm for two affine subspaces}]\label{thm:main:aff}
Let $x\in X$. Then $(\forall n\in \NN)$ we have
\begin{equation}
P_U T^n x
=P_U (T^n x+n\gap)=P_U((T_{-\gap})^nx)
=P_UT^n_{U,\gap+V}
=J_{-\gap+N_U}((T_{-\gap})^nx),
\end{equation}
and 
\begin{equation}\label{e:nearest:z}
P_U T^n x\to P_{Z_{\gap}} x=P_{U\cap (\gap+V)} x. 
\end{equation}
Moreover, if $\parl U+\parl V$ is closed 
(as is always the case when $X$ is finite-dimensional) then the 
convergence is linear 
\footnote{Recall that $x_n\to x$ \emph{linearly} with rate 
$\gamma\in \left]0,1\right[$
if $(\gamma^{-n}\norm{x_n-x})_{n\in \NN}$ is bounded.
}
 with rate being the cosine of the Friedrichs angle
\begin{equation}
c_F(\parl U,\parl V):=\sup_{\small\substack{u\in \parl U\cap W^\perp\cap{\BB{0}{1}} 
\\v\in \parl V\cap W^\perp\cap{\BB{0}{1}} }}
{\abs{\innp{u,v}}}<1,
\end{equation}
where $W=\parl U\cap \parl V$
and $\BB{0}{1}$ is the closed unit ball.
\end{thm}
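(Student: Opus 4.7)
The plan is to verify the chain of equalities using the affine structure of $T = \DRS{U}{V}$, and then to observe that the convergence claim reduces to the \emph{consistent} Douglas--Rachford theory for the translated pair $(U,\,\gap + V)$, which is exactly the setting of \cite{JAT2014}.

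For the equalities, I would first note that $T$ is affine by \cref{aff:sh:con}\ref{aff:sh:con:i}, so \cref{P:aff}\ref{P:aff:iv} applies and gives $(T_{-\gap})^n x = T^n x + n\gap$; by \cref{aff:sh:con}\ref{aff:sh:con:i:ii} we also have $T_{-\gap} = \DRS{U}{\gap + V}$, whence $(T_{-\gap})^n x = T^n_{U,\gap+V}\, x$. Applying $P_U$ to each of these three expressions and invoking \cref{aff:sh:con}\ref{aff:sh:con:i:i} --- which says $P_U$ absorbs any real multiple of $\gap$ --- produces the first four equalities. The last one follows because \cref{l:nor:shf}\ref{l:nor:shf:iii} gives $J_{-\gap + N_U} = P_U(\cdot + \gap)$, after which \cref{aff:sh:con}\ref{aff:sh:con:i:i} once more collapses the shift.

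I would then reduce the convergence claim to the consistent case. By \cref{aff:sh:con}\ref{aff:sh:con:iii:i}, $U \cap (\gap + V) = Z_{\gap}$, and under the standing assumption \cref{gap:assmp}, \cref{Fact:collect}\ref{Fact:collect:non:fady:Z} gives $Z_{\gap} \neq \emp$. So $U$ and $\gap + V$ \emph{do} intersect, and the identity $P_U T^n x = P_U T^n_{U,\gap+V}\, x$ just established means that the shadow sequence of the (possibly inconsistent) pair $(U, V)$ coincides with the shadow sequence of the \emph{consistent} Douglas--Rachford problem for $(U,\,\gap + V)$. Invoking the main convergence theorem of \cite{JAT2014} for two intersecting affine subspaces then yields $P_U T^n_{U,\gap+V}\, x \to P_{U \cap (\gap+V)} x$, which is \cref{e:nearest:z}.

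For the linear rate, I would again appeal to \cite{JAT2014}, which shows that when $\parl U + \parl(\gap + V)$ is closed the shadow sequence converges linearly at rate $c_F(\parl U,\, \parl(\gap + V))$, and that this cosine is strictly less than $1$ precisely under that closedness hypothesis. Because $\parl(\gap + V) = \parl V$, both the closedness condition and the rate match the ones stated in the theorem. The real crux of the whole argument is \cref{aff:sh:con}\ref{aff:sh:con:i:i}, namely that shifts by $\gap$ are invisible to $P_U$; this rests on $\gap \in (\parl U)^\perp \cap (\parl V)^\perp$ from \cref{aff:sh:con}\ref{aff:sh:con:i:iii} and is exactly what lets us translate $V$ by $\gap$ without perturbing the shadow iterates, converting the inconsistent problem into the consistent one handled in \cite{JAT2014}.
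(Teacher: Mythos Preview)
Your proposal is correct and follows essentially the same route as the paper: you invoke \cref{P:aff}\ref{P:aff:iv}, \cref{aff:sh:con}\ref{aff:sh:con:i:i}\&\ref{aff:sh:con:i:ii}, and \cref{l:nor:shf}\ref{l:nor:shf:iii} for the chain of equalities, then \cref{Fact:collect}\ref{Fact:collect:non:fady:Z} and \cref{aff:sh:con}\ref{aff:sh:con:iii:i} together with \cite{JAT2014} for the convergence and linear rate. Your explicit remark that $\parl(\gap+V)=\parl V$ is a helpful clarification for the rate statement, but otherwise the argument matches the paper's proof.
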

\begin{proof}
Let $n\in \NN$.
Using \Cref{aff:sh:con}\ref{aff:sh:con:i:i}
with $(x,\alpha)$ replaced 
by $(T^nx,n)$ we learn that 
$P_UT^nx= P_U(T^nx+n\gap)$.
Now combine with \cref{P:aff}\ref{P:aff:iv} 
to get the second identity.
The third identity follows from applying 
\cref{aff:sh:con}\ref{aff:sh:con:i:ii}.
Finally note that using the first identity,
 \Cref{aff:sh:con}\ref{aff:sh:con:i:i} 
with $(x,\alpha)$ replaced 
by $((T_{-\gap})^nx,1)$
and \cref{l:nor:shf}\ref{l:nor:shf:iii}
we learn that $P_UT^nx= P_U((T_{-\gap})^nx+\gap)
=J_{-\gap+N_U}((T_{-\gap})^nx)
$.
Now we prove \cref{e:nearest:z}. 
It follows from \cref{gap:assmp}, 
\cref{Fact:collect}\ref{Fact:collect:non:fady:Z} 
and \cref{aff:sh:con}\ref{aff:sh:con:iii:i}
that $Z_{\gap}=U\cap(\gap+V)\neq \fady$.
Now apply \cite[Corollary~4.5]{JAT2014}. 
\end{proof}
\begin{figure}[H]%\label{fig:com:lim}
 \centering
 \includegraphics[scale=0.32]{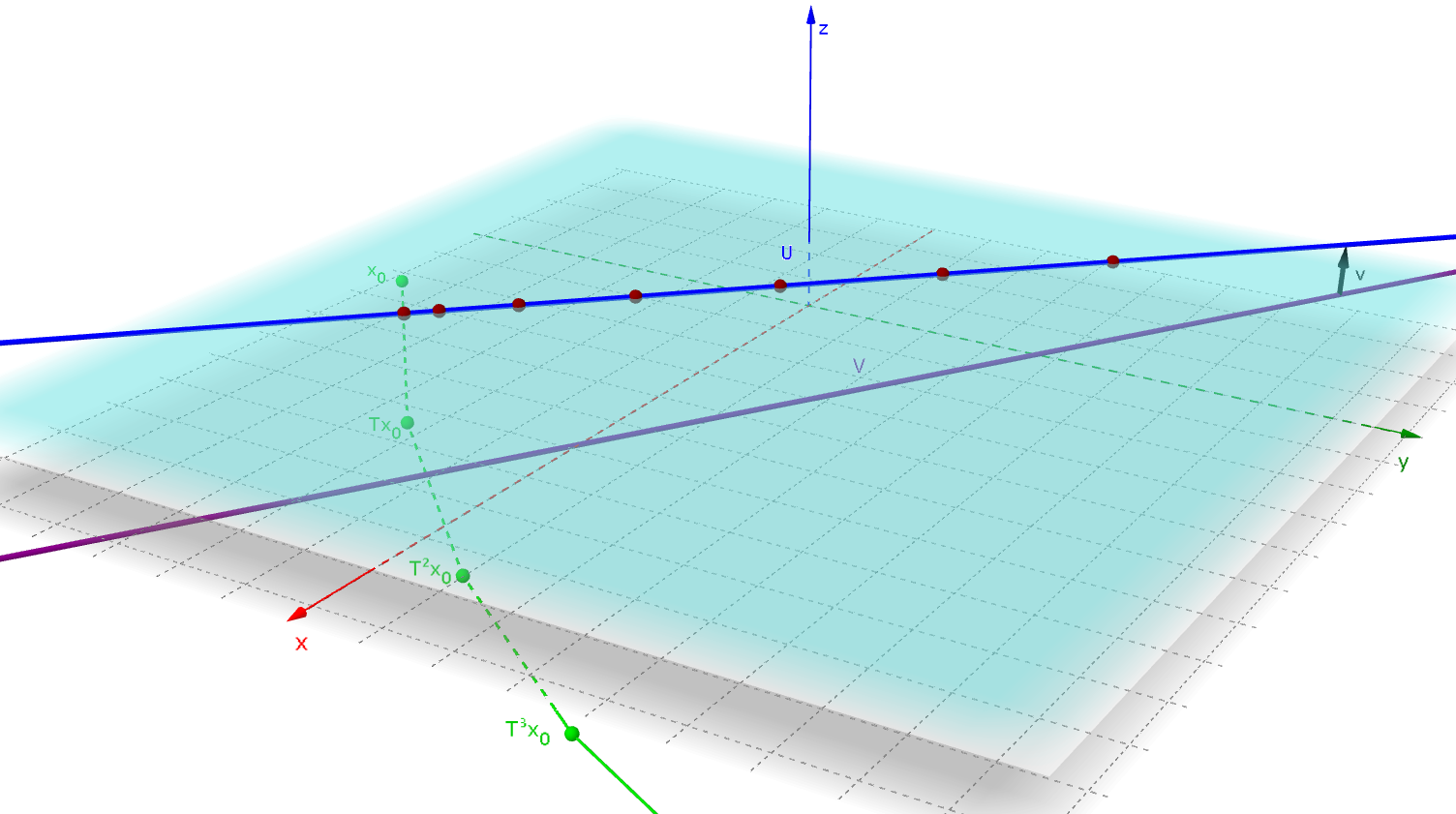}
 \caption{
 Two nonintersecting affine subspaces
 $U$ (blue line) and
 $V$ (purple line) in $\RR^3$.
 Shown are also the first few iterates of
 $(T^n x_0)_{n\in \NN}$ (green points) and 
  $(P_UT^n x_0)_{n\in \NN}$ (red points).}
\label{fig:com:shad}
\end{figure}
Figure \ref{fig:com:shad} shows 
a Geogebra snapshot 
\cite{geogebra} of
the Douglas--Rachford iterates and its shadows for 
two nonintersecting nonparallel lines $U$ and $V$ in $\RR^3$.

The following result is known
(see e.g., \cite[Corollary~1.5]{Br-Reich77} 
and \cite[Corollary~2.3]{Ba-Br-Reich78}).
We include a simple proof for completeness in \nameref{Appp:2}.

\begin{prop}\label{T:Asyp:reg}
Suppose that $T:X\to X$ be firmly nonexpansive, and
that $\gap=P_{\overline{\ran}(\Id-T)}0\in {\ran}(\Id-T)$. Then 
\begin{equation}
(\forall x\in X)\quad T^nx-T^{n+1}x\to \gap.
\end{equation}
\end{prop}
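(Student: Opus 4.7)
The plan is to compare the orbit $(T^n x)_\nnn$ with the orbit starting from a point $y_0$ satisfying $y_0 - Ty_0 = \gap$, exploiting the fact that on the second orbit the displacement is \emph{exactly} $\gap$ at every step.

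First I would invoke the standing assumption $\gap \in \ran(\Id - T)$, which by \cref{lem:in:out:T}\ref{lem:in:out:T:i:ii} gives some $y_0 \in \fix(\gap + T)$. By \cref{prop:fix:inc}\ref{prop:fix:inc:i:i}, $T^n y_0 = y_0 - n\gap$, so
\begin{equation*}
T^n y_0 - T^{n+1}y_0 = \gap \qquad \text{for every } n\in\NN.
\end{equation*}

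Next I would use the characterization of firm nonexpansiveness: for all $a,b\in X$,
\begin{equation*}
\|Ta - Tb\|^2 + \|(\Id - T)a - (\Id - T)b\|^2 \le \|a - b\|^2.
\end{equation*}
Applying this with $a = T^n x$ and $b = T^n y_0$ and using $(\Id - T)T^n y_0 = \gap$ yields
\begin{equation*}
\|T^{n+1}x - T^{n+1}y_0\|^2 + \|(T^n x - T^{n+1}x) - \gap\|^2 \le \|T^n x - T^n y_0\|^2.
\end{equation*}
Summing this telescoping inequality from $n = 0$ to $N-1$ gives
\begin{equation*}
\sum_{n=0}^{N-1} \|(T^n x - T^{n+1}x) - \gap\|^2 \le \|x - y_0\|^2 - \|T^N x - T^N y_0\|^2 \le \|x - y_0\|^2,
\end{equation*}
and letting $N\to\infty$ shows that $\sum_{n\in\NN} \|(T^n x - T^{n+1}x) - \gap\|^2$ converges. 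In particular the $n$-th term tends to $0$, which is exactly the desired conclusion $T^n x - T^{n+1} x \to \gap$.

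There is really no substantial obstacle here: the only ingredients are the existence of a point with constant displacement $\gap$ (guaranteed by \cref{gap:assmp} together with \cref{prop:fix:inc}\ref{prop:fix:inc:i:i}) and the standard telescoping inequality afforded by firm nonexpansiveness. The argument actually yields slightly more than asked, namely summability of $\|(T^n x - T^{n+1} x) - \gap\|^2$, which may be useful in other contexts.
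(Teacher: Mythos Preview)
Your proof is correct and follows essentially the same approach as the paper: both fix $y_0\in\fix(\gap+T)$, use \cref{prop:fix:inc}\ref{prop:fix:inc:i:i} to get $(\Id-T)T^ny_0=\gap$, and apply the firm-nonexpansiveness inequality with $a=T^nx$, $b=T^ny_0$. The only cosmetic difference is that the paper observes that $\|T^nx+n\gap-y_0\|$ is nonincreasing (F\'ejer monotonicity, \cref{prop:fix:inc}\ref{prop:fix:inc:ii:ii}) and hence convergent, so consecutive differences tend to~$0$, whereas you telescope the same inequality to obtain summability of $\|(T^nx-T^{n+1}x)-\gap\|^2$; your route is marginally more self-contained and yields the slightly stronger summability conclusion.
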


\begin{prop}[\bf{When only one set is an affine subspace}]
Suppose that $U$ is an affine subspace of $X$,
and that $T=\DRS{U}{V}$. Then for every $x\in X$
the sequence $(P_U T^n x)_{n\in \NN}$
is asymptotically regular, i.e., 
$P_U T^n x-P_U T^{n+1} x\to 0$.
\end{prop}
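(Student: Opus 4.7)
The plan is to reduce the asymptotic regularity of the shadows to the asymptotic regularity of the governing sequence (i.e.\ $T^nx-T^{n+1}x\to\gap$), exploiting the fact that $P_U$ is an affine operator together with the orthogonality relation $\gap\in(\parl U)^\perp$.

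First I would invoke \cref{T:Asyp:reg}: the Douglas--Rachford operator $T=\DRS{U}{V}=\Id-P_U+P_VR_U$ is firmly nonexpansive (a standard fact), and the standing assumption \cref{gap:assmp} gives $\gap\in\ran(\Id-T)$. Hence
\begin{equation*}
T^nx-T^{n+1}x\to \gap\qquad\text{as }n\to\infty.
\end{equation*}
Next, because $U$ is an affine subspace, fixing any $a\in U$ yields $U=a+\parl U$ and, by \cref{F:trans} (or \cite[Corollary~3.20]{BC2011}), $P_U=a+P_{\parl U}(\cdot-a)$. Since $P_{\parl U}$ is linear, the affine structure of $P_U$ gives the key identity
\begin{equation*}
P_Uy-P_Uz=P_{\parl U}(y-z)\qquad\text{for all }y,z\in X.
\end{equation*}
Applying this with $y=T^nx$ and $z=T^{n+1}x$, the continuity of $P_{\parl U}$ yields
\begin{equation*}
P_UT^nx-P_UT^{n+1}x=P_{\parl U}(T^nx-T^{n+1}x)\;\longrightarrow\; P_{\parl U}\gap.
\end{equation*}

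The remaining step, and the only nontrivial point, is to show $P_{\parl U}\gap=0$, i.e.\ $\gap\in(\parl U)^\perp$. For this I would cite exactly the same fact that was used in the proof of \cref{aff:sh:con}\ref{aff:sh:con:i:iii}, namely \cite[Proposition~2.7 \& Remark~2.8(ii)]{BCL04}, which gives $\gap\in(\rec U)^\oplus$. Since $U$ is an affine subspace, $\rec U=\parl U$ is a linear subspace, so $(\rec U)^\oplus=(\parl U)^\perp$. (Crucially, this does \emph{not} require $V$ to be affine; only $U$ enters.) Combining the three displays yields $P_UT^nx-P_UT^{n+1}x\to 0$, as claimed.

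The main (and essentially only) obstacle is the orthogonality $\gap\perp\parl U$; once this is in hand, everything else is an elementary consequence of affineness of $P_U$ and \cref{T:Asyp:reg}.
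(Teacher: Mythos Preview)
Your proof is correct and follows essentially the same approach as the paper: both arguments rest on the two key facts $\gap\in(\parl U)^\perp$ (from \cite[Remark~2.8(ii)]{BCL04}) and $T^nx-T^{n+1}x\to\gap$ (\cref{T:Asyp:reg}). The paper phrases the combination slightly differently---it writes $P_UT^{n+1}x=P_U(T^{n+1}x+\gap)$ via \cref{l:nor:shf}\ref{l:nor:shf:iv} and then invokes nonexpansiveness of $P_U$---whereas you use the equivalent identity $P_Uy-P_Uz=P_{\parl U}(y-z)$ and continuity of $P_{\parl U}$; these are cosmetic variants of the same idea.
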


\begin{proof}
Using \cite[Remark~2.8(ii)]{BCL04} we have
$\gap\in (\parl U)^\perp$.
It follows from \cref{l:nor:shf}\ref{l:nor:shf:iv}
applied with $(x,\alpha)$ replaced by 
$(T^{n+1}x,1)$
and \cref{T:Asyp:reg}
 that
\begin{equation}
\norm{P_U T^n x-P_U T^{n+1} x}
=\norm{P_U T^n x-P_U (T^{n+1} x+\gap)}
\le \norm{T^n x-T^{n+1} x-\gap}\to 0,
\end{equation}
as claimed.
\end{proof}
\begin{example}{\bf (The \emph{dual} shadows)}
Consider the case when 
$U$ and $V$ are affine subspaces
of $X$ such that $U\cap V=\fady$.
Set $\widetilde{A}:=N_U^{-1}$ and $\widetilde{B}:=N_V^{-\ovee}$.
Then $\widetilde{A}^{-1}=N_U$, $\widetilde{B}^{-\ovee}=N_V$, 
 and using
\cref{Fact:collect:T:self}
we have
$T_{(\widetilde{A},\widetilde{B})}
=T_{(\widetilde{A}^{-1},\widetilde{B}^{-\ovee})}=\DRS{U}{V}$.
Moreover the inverse resolvent identity
(see, e.g., \cite[Lemma~12.14]{Rock98}) implies that
$(\forall x\in X)$ $J_{\widetilde{A}} T^n x=
(\Id-P_U)T^n x=T^n x-P_U T^n x$.
Note that $K=U\cap V=\fady$, 
hence by \cref{Fact:collect:Z:fix}
 and \cref{Fact:collect:T:self} 
$\fix T_{(\widetilde{A},\widetilde{B})}=\fady$.
Using \cite[Corollary~6(a)]{Pazy} 
we learn that for every $x\in X$ we have $\norm{T^n x}\to \infty$.
Moreover, in view of \cref{gap:assmp},
using \cite[Theorem~3.13(iii)]{BCL04}
we know that for every $x\in X$ we have 
$(P_U T^n x)_{n\in \NN}$ is a bounded sequence.
Therefore, 
    $\norm{J_{\widetilde{A}} T^n x}=\norm{T^n x-P_U T^n x}
    \ge \norm{T^n x}-\norm{P_U T^n x}\to \infty$. 

\end{example}

We conclude with the following example which 
shows that for two affine
(but not normal cone) operators
the shadows need not converge.

\begin{example}\label{ex:not:NC}
Suppose that $X=\RR^2$ and let 
$S:\RR^2\to \RR^2:(x_1,x_2)\mapsto (-x_2,x_1)$,
be the counter-clockwise rotator by $\pi/2$.
Let $b\in \RR^2\smallsetminus\stb{(0,0)}$. 
Suppose that $A:=S$ and set $B:=-S+b$. 
Then $\zer A\neq \fady$, $\zer B\neq \fady$ 
yet $\zer(A+B)=\fady$.
Moreover, $\gap=(\Id+S)(b)$, 
the set of normal solutions $Z_{\gap}=\RR^2$
and  for every $ x\in \RR^2$ we have
 $\norm{J_A T^nx}\to \infty$.
\end{example}
\begin{proof}
Let $x\in \RR^2$ and note that $S$ and $-S$ are both linear, 
continuous, single-valued,
 monotone and $S^2=(-S)^2=-\Id$.
 It follows from \cite[Proposition~2.10]{Sicon2014}
 that $J_Ax=J_Sx=\tfrac{1}{2}(\Id-S)x=
\tfrac{1}{2}(x-Sx) $. Similarly using
\cite[Proposition~23.15(ii)]{BC2011}
we can see that
$J_B x=\tfrac{1}{2}(x-b+Sx-Sb)$.
Therefore we have $ R_Ax=-Sx$
and $R_Bx=-b+Sx-Sb$. 
Hence $R_BR_Ax=S(-Sx)-Sb-b
=-S^2x-Sb-b=x-Sb-b=x-(\Id+S)b$.
Consequently
we have
\begin{equation}\label{T:skew}
(\forall x\in \RR^2) \quad Tx=\tfrac{1}{2}(\Id+R_BR_A)x=x-\tfrac{1}{2}(\Id+S)b.
\end{equation}
It follows from \cref{T:skew} that $\ran(\Id-T)=\{\tfrac{1}{2}(\Id+S)b\}$, 
hence $\gap=\tfrac{1}{2}(\Id+S)b$ and $Tx=x-\gap$. 
Therefore, using \cref{P:aff}\ref{P:aff:v} 
$\fix(\gap+T)=\fix(T_{-\gap})=\RR^2$.
Moreover, using \cref{Fact:collect:Z:fix} and 
\cref{Fact:collect:normal:T}
applied to the normal pair
$\bk{_{\gap}A,B_{\gap}} $ we learn that
 $Z_{\gap}=J_{_{\gap} A}(\fix(T_{-\gap}))=\RR^2$.
 In view of \cref{prop:fix:inc}\ref{prop:fix:inc:i}
 we have $T^n x=x-n\gap$.
 Hence, using that $J_A $ is linear, we get 
 $J_A T^n x=J_A(x-n\gap)=J_A x-nJ_A\gap$.
Now 
 $J_A\gap=\frac{1}{2}(\Id-S)(\frac{1}{2}(\Id+S)b)=\frac{1}{2}b\neq(0,0)$,
which completes the proof.
 \end{proof}

\vskip 8mm

\appendix

\section*{Appendix A}\label{Appp:1}
\begin{myproof}[Proof of \cref{lem:norm:proj}.]
Using that $P_C$ is firmly nonexpansive we have
$\normsq{c-P_C0}=\normsq{P_Cc-P_C0}
\le \normsq{c-0}-\normsq{(\Id-P_C)c-(\Id-P_C)0}
= \normsq{c}-\normsq{c-P_Cc+P_C0}= \normsq{c}-\normsq{P_C0}=0$.
\end{myproof}
\section*{Appendix B}\label{app:3}
\begin{myproof}[Proof of \cref{exmp:not:fejer}.]
Clearly 
\begin{equation}
\Id-T=P_{\left[\alpha,\beta\right]}:\RR\to \RR
:x\mapsto
\begin{cases}
\alpha,&\text{~\rm if~}x\le \alpha;\\
x,&\text{~\rm if~}\alpha<x\le\beta;\\
\beta,&\text{~\rm if~}x> \beta.
\end{cases}
\end{equation}
Therefore, $\ran(\Id-T)=[\alpha,\beta]$, and consequently
$\gap=\alpha$. Moreover
\begin{equation}\label{e:T:mono:ex}
(\forall x\in \RR)\quad x\ge Tx+\alpha\ge T^2x
+2\alpha\ge\cdots\ge T^nx+n\alpha\ge \cdots .
\end{equation}
It is clear from \cref{ex:short} that 
\begin{equation}\label{e:fix:v+T}
\fix (\gap +T)=\left] -\infty,\alpha\right]. 
\end{equation}
The statement for $\fix T_{-\gap}$ then
follows from combining
 \cref{e:fix:v+T} and  \cref{lem:in:out:T}\ref{lem:in:out:T:i}.
The convergence of the sequence follows from 
\cref{ex:short} or
\cref{prop:RR:conv}\ref{prop:RR:conv:i}.
Now we prove \cref{e:all:cases}.
We claim that 
\begin{equation}\label{e:all:cases:nog}
T^n:\RR\to\RR:x\mapsto
\begin{cases}
x-n\alpha,&\text{~\rm if~}x\le \alpha;\\
(1-n)\alpha,&\text{~\rm if~}\alpha<x\le \beta;\\
x-n\beta,&\text{~\rm if~}x>\beta \text{~\rm and~} n\le \lfloor {x}/{\beta} \rfloor;\\
\min\stb{\alpha,x-\Big\lfloor \tfrac{x}{\beta} \Big\rfloor\beta}
+\bk{\Big\lfloor  \tfrac{x}{\beta} \Big\rfloor-n}\alpha,
&\text{~\rm if~}x>\beta \text{~\rm and~} n> \lfloor {x}/{\beta} \rfloor.
\end{cases}
\end{equation}
Using induction it is easy to verify the cases when 
$x\le \alpha$ and when $\alpha<x\le \beta$.
Now we focus on the case when $x>\beta$.
Set 
\begin{equation}
K:=\lfloor x/\beta\rfloor \text{~~and~~} 
r:=x-K\beta,
\end{equation}
and note that $x=K\beta+r$, $K\in \{1,2,3,\ldots\}$ 
and $0\le r<\beta$. In view of \cref{e:T:mono:ex}, 
if $n\in\{0,1,2,3,\ldots,K\}$ we 
get $T^n x=x-n\beta =(K-n)\beta+r$.
In particular,
\begin{equation}\label{e:K:r}
T^K x=x-\lfloor x/\beta\rfloor \beta=r.
\end{equation}
If $n>K$  we examine two cases.
Case 1: $0\le r\le \alpha$. It follows from \cref{e:K:r} and
 \cref{ex:not:fejer} that $(\forall n\ge K)$ $T^n x=r+(K-n)\alpha$.
Case 2: $ \alpha< r<\beta$.
Note that $T^{K+1} x=0$, therefore using \cref{e:K:r} and
\cref{ex:not:fejer} we have 
 $(\forall n>K)$ $T^n x=(K+1-n)\alpha=\alpha+(K-n)\alpha$,
 which proves \cref{e:all:cases:nog}.
 Now \cref{e:all:cases} follows from \cref{e:all:cases:nog}
 because $\gap=\alpha$.
Letting $n\to \infty$ in \cref{e:all:cases} yields 
\cref{e:all:cases:lim}.
Note that $\min\{\alpha,x-\lfloor  \tfrac{x}{\beta}\rfloor\beta \}\ge0$
and $\lfloor  \tfrac{x}{\beta}\rfloor\ge 1$.
By considering cases $(K=1$ and $K\ge 1)$,
 \cref{e:all:cases:lim} implies that
$\lim_{n\to \infty}(T^n x_0+n\gap)=
\min\{\alpha,x-\lfloor  \tfrac{x}{\beta}\rfloor\beta \}
+\lfloor  \tfrac{x}{\beta}\rfloor\alpha >\alpha
\not\in \left]-\infty,\alpha\right]=\fix(\gap+T)$.

\end{myproof}

\section*{Appendix C}\label{app:D}

\begin{myproof}[Proof of \cref{e:asym}.]
Considering cases, we easily check that
\begin{equation}\label{ex:ran:disp}
\Id-T\colon\RR\to\RR: x\mapsto
\bk{1-\alpha}\max\stb{x,\beta}+\alpha\beta \ge \beta>0.
\end{equation}
Hence $\fix T=\fady$ and $\gap=\beta $ as claimed.
Moreover, using \cref{ex:ran:disp} one can verify that
 \begin{equation}\label{e:abl:beta}
 (\forall x\in X)\quad x\ge Tx+\beta>\cdots
 \ge T^nx+n\beta\ge T^{n+1}x+(n+1)\beta\ge \cdots  .
 \end{equation}
We also verify that
\begin{equation}\label{eq:defn:in}
(\forall x\in \RR)\quad T_{-\gap} :\RR\to\RR: x\mapsto
\max\stb{x,0}\alpha+\min\stb{x,0}.
\end{equation}
We now prove \cref{ex:inn:sft} by induction.
Let $x\in \RR$.

Clearly when $n=0$ the base case holds true.
Now suppose that for some $n\in \NN$
\cref{ex:inn:sft} holds.
If $ x\le 0$ then $\bk{T_{-\gap}}^n x=x\le 0$, 
and therefore, \cref{eq:defn:in} implies that
$\bk{T_{-\gap}}^{n+1} x
=T_{-\gap}(\bk{T_{-\gap}}^n x)=T_{-\gap}x=x$.
Similarly we have
$ x> 0 \RA \alpha^nx=\bk{T_{-\gap}}^n x> 0$, 
and consequently
\cref{eq:defn:in} implies that
 $\bk{T_{-\gap}}^{n+1} x=T_{-\gap}(\bk{T_{-\gap}}^n x)
 =T_{-\gap}(\alpha^n x)=\alpha^{n+1} x$.
The proof of \cref{ex:out:sft} follows 
from combining \cref{ex:inn:sft}
and \cref{lem:in:out:T}\ref{lem:in:out:T:ii}.
Now we turn to \cref{ex:nv:sft}.
We consider two cases.\\
 {Case 1: $x\le \beta$}. 
It is obvious using the definition of $\T $ that
  $(\forall n\in \NN)$  $\T^n x=x-n\beta$.\\
 {Case 2: $x> \beta$}. 
 Let $n\in \NN$ be such that $T^{n}x>\beta$.
By \cref{e:abl:beta} 
and \cref{eq:ex:long} we have
\begin{align}\label{e:geom:ser}
  T^{n+1} x
  &= \alpha^{n+1} x -(\alpha^{n+1}+\alpha^{n}+\cdots+\alpha)
 \beta=\alpha^{n+1}x -\frac{\alpha(1-\alpha^{n+1})}{1-\alpha}\beta\nonumber\\
 &=\alpha^{n+1}\bk{\frac{(1-\alpha)x+\alpha\beta}{1-\alpha}}
 -\frac{\alpha}{1-\alpha}\beta.
 \end{align}
In view of \cref{e:abl:beta} there exists a unique integer, 
say, $ q(x)\in \{1,2,\ldots\}$ 
 that satisfies
 $\T^{q(x)-1}x>  \beta$
 and $\T^{q(x)} x\le\beta$. 
Since $0<\alpha<1$, 
using \cref{e:geom:ser}
we have
 \begin{align*}
 T^{q(x)}x\le \beta
 &\iff \alpha^{q(x)}\bk{\frac{(1-\alpha)x+\alpha\beta}{1-\alpha}}
 -\frac{\alpha}{1-\alpha}\beta\le \beta\\
&\iff \alpha^{q(x)}\bk{\frac{(1-\alpha)x+\alpha\beta}{1-\alpha}}
\le \frac{\beta}{1-\alpha}
\iff  \alpha^{q(x)}\bk{(1-\alpha)x+\alpha\beta}\le \beta\\
&\iff \alpha^{q(x)}\le \frac{\beta}{(1-\alpha)x+\alpha\beta}
\iff {q(x)\ge{\log_\alpha \frac{\beta}
{\alpha\beta+(1-\alpha)x}}}
.
 \end{align*}
 Consequently, 
$q(x)=\left\lceil {\log_\alpha \frac{\beta}
{\alpha\beta+(1-\alpha)x}}\right \rceil $.
 At this point, since $\T^{q(x)} x \le \beta$, 
 we must have $(\forall n\ge q(x))$  $\T^n x =\T^{q(x)} x-(n-q(x))\beta$,
 which proves \cref{ex:nv:sft}.
The formulae \cref{ex:inn:sft:lim}, 
\cref{ex:out:sft:lim} and \cref{ex:nv:sft:lim}
are direct consequences of
 \cref{ex:inn:sft}, \cref{ex:out:sft} and \cref{ex:nv:sft}, 
 respectively.
To prove the last claim note that 
if 
$
S:\RR\to \RR
$
is such that for every $n\in \NN$
we have
$
S^n=T^n+n\gap
$,
then setting $n=1$ must yield
 \begin{equation}\label{eq:ex:last}
S=\gap+T\colon\RR\to\RR: x\mapsto
\begin{cases}
x,&x\le\beta;\\
\alpha\bk{x-\beta}+\beta,& x>\beta.
\end{cases}
\end{equation}
Now compare 
\cref{ex:out:sft} and \cref{ex:nv:sft}.
\end{myproof}
\section*{Appendix D}\label{Appp:2}
\begin{myproof}[Proof of \cref{T:Asyp:reg}.]
Let $y_0\in \fix(\gap+T)$ and note that
\cref{prop:fix:inc}\ref{prop:fix:inc:i:i} implies 
that $(\forall n\in \NN)~(\Id-T)T^n y_0=\gap$. 
Since $T $ is firmly nonexpansive,
it follows from 
\cref{prop:fix:inc}\ref{prop:fix:inc:ii:ii}
and
\cite[Proposition~5.4(ii)]{BC2011}
that
\begin{align}
\normsq{T^nx-T^{n+1}x- \gap}
&=\normsq{(\Id-T)T^n x-(\Id-T)T^ny_0}\nonumber\\
&\le \normsq{T^n x-T^n y_0}- \normsq{T^{n+1} x-T^{n+1} y_0}\nonumber\\
&=\normsq{T^{n}x+n\gap-y_0}-\normsq{T^{n+1}x+(n+1)\gap-y_0}\to 0.\nonumber
\end{align}
\end{myproof}


\begin{thebibliography}{999}
\bibitem{AT}
H.\ Attouch and M.\ Th\'era,
A general duality principle for the sum of two operators,
\emph{Journal of Convex Analysis} 3 (1996), 1--24.

\bibitem{Ba-Br-Reich78}
J.B.\ Baillon, R.E.\ Bruck and S.\ Reich, 
On the asymptotic behavior of nonexpansive 
mappings and semigroups in Banach spaces.  
\emph{Houston Journal of Mathematics} 4 (1978), 1--9.


\bibitem{JAT2014}
H.H.\ Bauschke, J.Y.\ Bello Cruz, T.T.A.\ Nghia, H.M.\ Phan and X.\ Wang: 
The rate of linear convergence of the Douglas-Rachford algorithm 
for subspaces is the cosine of the Friedrichs angle, 
\emph{Journal of Approximation Theory} 185 (2014), 63--79.

\bibitem{JAT2012} H.H.\ Bauschke, R.I.\ Bo\c{t}, 
W.L.\ Hare and W.M.\ Moursi,
Attouch--Th\'era 
duality revisited: paramonotonicity and operator splitting, 
\emph{Journal of Approximation Theory} 164 (2012), 1065--1084. 

\bibitem{BC2011}
H.H.\ Bauschke and P.L.\ Combettes,
\emph{Convex Analysis and Monotone 
Operator Theory in Hilbert Spaces},
Springer, 2011.


\bibitem{BCL04}
H.H.\ Bauschke, P.L.\ Combettes
and D.R.\ Luke,
Finding best approximation pairs relative to two
closed convex sets in Hilbert spaces,
\emph{Journal of Approximation Theory}
~127 (2004), 178--192.

\bibitem{BDHP03}
H.H.\ Bauschke, F.\ Deutsch, H.\ Hundal 
and S.-H.\ Park: Accelerating the convergence of 
the method of alternating projections, 
\emph{Transactions of the American Mathematical Society} 
355 (2003), 3433--3461.  

\bibitem{Sicon2014}
H.H.\ Bauschke, W.L.\ Hare and W.M.\ Moursi, Generalized solutions 
for the sum of two maximally monotone operators, 
\emph{SIAM Journal on Control and Optimization} 52 (2014), 
1034--1047.

\bibitem{BorVanBook}
J.M.\ Borwein and J.D.\ Vanderwerff,
\emph{Convex Functions},
Cambridge University Press, 2010.


\bibitem{Brezis}
H.\ Brezis,
\emph{Operateurs Maximaux Monotones et
Semi-Groupes de Contractions dans les Espaces de Hilbert},
North-Holland/Elsevier, 1973. % New York

\bibitem{Br-Reich77}
R.E.\ Bruck and S.\ Reich, 
Nonexpansive projections and resolvents of accretive operators in
Banach spaces, \emph{Houston Journal of Mathematics} 
3 (1977), 459--470.


\bibitem{BurIus}
R.S.\ Burachik and A.N.\ Iusem,
\emph{Set-Valued Mappings and Enlargements
of Monotone Operators},
Springer-Verlag, 2008.

\bibitem{Comb04}
P.L.\ Combettes,
Solving monotone inclusions via compositions of nonexpansive averaged
operators,
\emph{Optimization}~53 (2004), 475--504.

\bibitem{DZ14}
L.\ Demanet and X.\ Zhang, 
Eventual linear convergence of the Douglas--Rachford iteration
for basis pursuit, \emph{to appear in Mathematics of Compution, AMS.}

\bibitem{geogebra}
Geogebra, \url{http://www.geogebra.org}

\bibitem{EckThesis}
J.\ Eckstein,
\emph{Splitting Methods for Monotone Operators with
Applications to Parallel Optimization},
Ph.D.~thesis, MIT, 1989.

\bibitem{EckBer}
J.\ Eckstein and D.P.\ Bertsekas,
On the Douglas--Rachford splitting method
and the proximal point algorithm for maximal monotone
operators,
\emph{Mathematical Programming (Series~A)}~55 (1992), 293--318.

\bibitem{HL13}
R.\ Hesse and D.R.\ Luke, 
Nonconvex notions of regularity and 
convergence of fundamental
algorithms for feasibility problems, 
\emph{SIAM Journal on Optimization} 23 (2013), 2397--2419.

\bibitem{HLN14}
R.\ Hesse, D.R.\ Luke, and P.\ Neumann, 
Alternating projections and Douglas-Rachford for
sparse affine feasibility,
\emph{IEEE Transactions on Signal Processing}, vol. 62 (2014), 
No. 18. 

\bibitem{Iusem98}
A.N.\ Iusem,
On some properties of paramonotone operators,
\emph{Journal of Convex Analysis}~5 (1998), 269--278.

\bibitem{L-M79}
 P.L.\ Lions and B.\ Mercier, Splitting algorithms for the sum of two
nonlinear operators.
\emph{SIAM Journal on Numerical Analysis}~16(6) (1979), 964--979. 

\bibitem{Pazy} A.\ Pazy, Asymptotic behavior of contractions in
Hilbert space, \emph{Israel Journal of  Mathematics} 9 (1971), 235--240.

\bibitem{Rock70}
R.T.\ Rockafellar,
\emph{Convex Analysis},
Princeton University Press, Princeton, 1970.
%
\bibitem{Rock98}
R.T.\ Rockafellar and R.J-B.\ Wets,
\emph{Variational Analysis},
Springer-Verlag, %New York,
corrected 3rd printing, 2009.
%
\bibitem{Simons1}
S.\ Simons,
\emph{Minimax and Monotonicity},
Springer-Verlag,
1998.
%
\bibitem{Simons2}
S.\ Simons,
\emph{From Hahn-Banach to Monotonicity},
Springer-Verlag,
2008.
%
\bibitem{Svaiter2012}
B.F.\ Svaiter,
On weak convergence of the Douglas--Rachford method,
\emph{SIAM Journal on Control and Optimization}
49 (2011), 280--287.

%%
\bibitem{Zeidler2a}
E.\ Zeidler,
\emph{Nonlinear Functional Analysis and Its Applications II/A:
Linear Monotone Operators},
Springer-Verlag, 1990.

\bibitem{Zeidler2b}
E.\ Zeidler,
\emph{Nonlinear Functional Analysis and Its Applications II/B:
Nonlinear Monotone Operators},
Springer-Verlag, 1990.

\bibitem{Zeidler1}
E.\ Zeidler,
\emph{Nonlinear Functional Analysis and Its Applications I:
Fixed Point Theorems},
Springer-Verlag, 1993.

\end{thebibliography}
\end{document}